\documentclass[a4paper,11pt,reqno]{amsart}

\usepackage[colorlinks,
           linkcolor=blue,      
           anchorcolor=blue,  
           citecolor=red,       
            ]{hyperref}
\usepackage{amsmath,amssymb,amsthm}
\usepackage{latexsym}
\usepackage{color}
\usepackage{graphicx}
\usepackage{mathrsfs}
\usepackage{enumerate}
\usepackage[abbrev]{amsrefs}
\usepackage[T1]{fontenc}
\usepackage{mathtools}
\mathtoolsset{showonlyrefs=true}
\allowdisplaybreaks[4]

\theoremstyle{plain}
\newtheorem{thm}{Theorem}[section]
\newtheorem{lemm}[thm]{Lemma}

\theoremstyle{definition}

\newtheorem{rem}[thm]{Remark}

\makeatletter

\@addtoreset{equation}{section}
\makeatother

\newcommand{\N}{\mathbb{N}}
\newcommand{\Z}{\mathbb{Z}}

\newcommand{\R}{\mathbb{R}}

\newcommand{\dB}{\dot{B}}

\newcommand{\dH}{\dot{H}}

\newcommand{\supp}{\operatorname{supp}}

\renewcommand{\leq}{\leqslant}
\renewcommand{\geq}{\geqslant}

\newcommand{\Om}{\Omega}

\renewcommand{\div}{\operatorname{div}}

\newcommand{\n}[1]{{\left\|#1\right\|}}
\newcommand{\abso}[1]{{\left|#1\right|}}
\newcommand{\lp}[1]{\left[#1\right]}
\newcommand{\Mp}[1]{\left\{#1\right\}}
\renewcommand{\sp}[1]{\left(#1\right)}

\newcommand{\dtau}{ \, {{\rm d}} \tau  }

\begin{document}
\title[Decay of rotating MHD]
{
Refined decay estimates for global solutions to the rotating MHD equations
}
\author[M.~Fujii]{Mikihiro Fujii}
\address[M.~Fujii]{Graduate School of Science, Nagoya City University}
\email[M.~Fujii]{fujii.mikihiro@nsc.nagoya-cu.ac.jp}
\author[Y.~Li]{Yang Li}
\address[Y.~Li]{School of Mathematical Sciences, Anhui University, Hefei, 230601, People's Republic of China}
\email[Y.~Li]{lynjum@163.com}
\keywords{rotating MHD equations, dispersive effect, decay estimates}
\subjclass[2020]{76U60, 35Q86, 76W05}
\begin{abstract}
In this paper, we consider the Cauchy problem for the incompressible magnetohydrodynamic equations with the Coriolis force in the three-dimensional whole space. For large initial data, it is known that the Cauchy problem admits a unique global solution in critical Sobolev space $\dot{H}^{1/2}(\R^3)$ provided that the speed of rotation is fast enough. By using delicate dispersive estimates, we show refined decay estimates of the velocity field and magnetic field. These decay rates significantly improve the ones obtained by Kim [\emph{J. Differential Equations.}, 2022] in the subcritical Sobolev framework $H^s(\R^3)$ with $1/2<s<3/2$.  
\end{abstract}
\maketitle

\tableofcontents

\section{Introduction}
In this paper, we consider the Cauchy problem for the incompressible magnetohydrodynamic (`MHD' in short) equations with the Coriolis force in $\R^3$. In Eulerian coordinates, the governing equations take the form: 
\begin{align}\label{eq:non}
    \begin{cases}
        \partial_t u - \Delta u + \Omega e_3 \times u + (u \cdot \nabla)u-
        (B \cdot\nabla) B+ \nabla P =0, & t>0,x \in \R^3, \\
        \partial_t B-\Delta B+ (u \cdot \nabla)B-(B\cdot \nabla)u=0,  & t>0,x \in \R^3, \\
        \div u =\div B= 0, & t\geq 0, x \in \R^3, \\
        u(0,x) = u_0(x),\quad B(0,x)=B_0(x), & x \in \R^3.
    \end{cases}
\end{align}
Here, $u=u(t,x):[0,\infty)\times \R^3 \rightarrow \R^3$ and $B=B(t,x):[0,\infty)\times \R^3 \rightarrow \R^3$ denote the unknown velocity field and magnetic field respectively. $P=P(t,x):(0,\infty) \times \R^3 \rightarrow \R$ denotes the scalar pressure. The rotating axis is $e_3:=(0,0,1)$ and the speed of rotation is denoted by $\Omega\in \R$.

Now we briefly recall some previous results related to \eqref{eq:non}. If $\Omega=0$, \eqref{eq:non} reduces to the classical incompressible MHD equations. Mathematical results on this system is nowadays classical, see \cite{ST83}. For large initial data, the system admits global regular solution in $2$D and global existence of weak solution in $3$D. For small initial data, the system admits global small smooth solution in $3$D. For the decay estimates of global solutions in $L^2$, we refer to \cites{Ko89,MS1,SSS-96}.  

When $B=0$, \eqref{eq:non} reduces to the incompressible rotating Navier--Stokes equations. Mathematical studies of the incompressible rotating Navier--Stokes equations originates from a series of work by Babin et al. \cites{BMN97,BMN99,BMN01}. We refer to \cites{CDGG3,CDGG2,CDGG,Fuj-24,HS10,IT13,IT14,KLT14} for more results on global well-posedness of the incompressible rotating Navier--Stokes equations. Later, it was found that fast rotation, acting as the dispersive effect, enhances the decay estimates of velocity field, see \cites{Ahn-Kim-Lee-22,ET23,Yos-26}; very recently, Fujii et al. \cite{FLX26} improved the decay rates by delicate dispersive estimates.

Now we focus on the full system \eqref{eq:non}. Due to the Coriolis force and the mutual interactions between the velocity field and magnetic field, the analysis for \eqref{eq:non} is more involved. For small initial data, Abidin and Chen \cite{AC21} proved the global well-posedness of \eqref{eq:non} in Fourier-Besov spaces. Ahn et al. \cite{AKL21} proved the global well-posedness for large initial data, when $|\Omega|$ is sufficiently large. More precisely, Ahn et al. \cite{AKL21} assumed that
\begin{align}
  &  u_0 \in H^s(\R^3), \quad B_0 \in (L^2 \cap L^q)(\R^3), \\
  & \frac{1}{2}<s <\frac{3}{4}, \quad 
  3<q < \min\left\{ 
  \frac{6}{3-2s}, \frac{27}{6+4s}
  \right\}. 
\end{align}
This result was later improved by Kim \cite{Kim22}, who proved the global well-posedness for \eqref{eq:non} when 
\begin{align}
    & (u_0,B_0) \in H^{s}(\R^3), \quad \frac{1}{2}<s <\frac{3}{2},
    \quad 0<\gamma \leq s-\frac{1}{2},
\end{align}
and $|\Omega|$ is sufficiently large compared with $\n{(u_0,B_0)}_{ H^{\gamma+ 1/2} }$. Assuming moreover that $(u_0,B_0)\in L^1(\R^3)$ and 
\begin{align}\label{decay-Kim-q}
 \frac{1}{4}+\frac{s}{6}< \frac{1}{q}\leq \frac{1}{2}, \quad 
 \frac{1}{q}> \frac{4-\gamma}{8+\gamma},
\end{align}
Kim \cite{Kim22} further obtained the decay estimates as 
\begin{align}
 &   \n{u(t)}_{L^q} \leq C t^{-\frac{3}{2}(1-\frac{1}{q})  }
 (|\Omega|t)^{-(1-\frac{2}{q}) }, \label{decay-kim}  \\
 & \n{B(t)}_{L^q} \leq C t^{-\frac{3}{2}(1-\frac{1}{q})  } ,  \label{decay-kim-B}
\end{align}
when $|\Omega|$ is sufficiently large compared with $\n{(u_0,B_0)}_{ \dH^{\gamma+ 1/2} \cap L^1 }$. Recently, Takada and Yoneda \cite{Tak-Yon-24} proved the global well-posedness of \eqref{eq:non} in critical Sobolev space $\dot{H}^{1/2}(\R^3)$ with large initial data, as long as $|\Om|$ is sufficiently large.

Our aim of this paper is to improve the decay estimates in Kim \cite{Kim22}. More precisely, 
\begin{itemize}
\item{
Under the divergence-free condition and $L^1$ assumption of $(u_0,B_0)$, it is natural to expect the slightly faster decay estimates, for instance instead of \eqref{decay-kim-B}, 
\begin{align}
     \n{B(t)}_{L^q} =o\sp{  t^{-\frac{3}{2}(1-\frac{1}{q})  } } \quad \text{as}\quad t \rightarrow \infty.  
\end{align}
}
    \item{
    From \eqref{decay-kim} we see that the fast rotation gives the additional decay rate $-(1-2/q)$ for the velocity field. Inspired by our recent work \cite{FLX26}, we aim to obtain a \emph{faster} decay rate than $-(1-2/q)$. 
    }
    \item{
    Seeing that \eqref{decay-kim} and \eqref{decay-kim-B} hold for $2\leq q<3$ satisfying \eqref{decay-Kim-q}, we aim to obtain the decay estimates of the velocity field and magnetic field for \emph{general} $q$ ranging in $[2,\infty]$.  
    }
\end{itemize}

Now we are ready to state the main results of this paper. The first theorem treats the $L^p$ decay estimates of global solutions to \eqref{eq:non} with critical Sobolev regularity. 
\begin{thm}\label{thm-1}
    Let $2\leq p\leq \infty$ and $\ell \in \N \cup \{0\}$.
    Let $u_0,B_0 \in \dot H^{\frac{1}{2}}(\R^3)\cap L^1(\R^3)$ satisfy $\div u_0 =\div B_0= 0$.
    Then, there exists a positive constant $\Omega_0=\Omega_0(u_0,B_0)$ such that
    for any $\Omega \in \R$ with $|\Omega| \geq \Omega_0$, \eqref{eq:non} possesses a unique global solution $u,B \in C([0,\infty);H^{\frac{1}{2}}(\R^3)) \cap C((0,\infty);L^\infty(\R^3))$. Moreover, we have the decay estimates 
    \begin{align}\label{non-dec-L^p}
      &  \n{\nabla^{\ell} u(t)}_{L^p} = 
        \begin{cases}
            o\sp{t^{-\frac{3}{2}(1-\frac{1}{p})-\frac{\ell}{2}}(1+|\Omega|t)^{-\frac{3}{2}(1-\frac{2}{p})}} & (2 \leq p <4),\\
            o\sp{t^{-\frac{3}{2}(1-\frac{1}{4})-\frac{\ell}{2}}(1+|\Omega|t)^{-\frac{3}{4}}\sp{\log(e+|\Omega|t)}^{\frac{1}{4}}} & (p = 4),\\
            o\sp{t^{-\frac{3}{2}(1-\frac{1}{p})-\frac{\ell}{2}}(1+|\Omega|t)^{-(1-\frac{1}{p})}} & (4<p\leq \infty) , 
        \end{cases} \\
        & \n{\nabla^{\ell} B(t)}_{L^p} = o\sp{   t^{-\frac{3}{2}(1-\frac{1}{p})-\frac{\ell}{2}}  } 
    \end{align}
    as $t \to \infty$.
\end{thm}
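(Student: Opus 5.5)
The plan is to take the global solution from the Takada--Yoneda result \cite{Tak-Yon-24} in $\dot H^{1/2}$, which also gives global bounds such as $u,B\in L^\infty(0,\infty;\dot H^{1/2})\cap L^2(0,\infty;\dot H^{3/2})$ together with smallness of a Strichartz-type norm of $u$ when $|\Omega|\geq\Omega_0$. Decay is then obtained from the Duhamel formulation. We write $T_\Omega(t)$ for the Stokes--Coriolis semigroup and $e^{t\Delta}$ for the heat semigroup:
\begin{align}
u(t)&=T_\Omega(t)u_0-\int_0^t T_\Omega(t-\tau)\mathbb{P}\,\div\bigl(u\otimes u-B\otimes B\bigr)(\tau)\dtau,\\
B(t)&=e^{t\Delta}B_0-\int_0^t e^{(t-\tau)\Delta}\div\bigl(u\otimes B-B\otimes u\bigr)(\tau)\dtau .
\end{align}

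\textbf{Linear estimates.} The first task is to prove $L^1$--$L^p$ bounds for $\nabla^\ell T_\Omega(t)$. The symbol is $e^{-t|\xi|^2\pm i\Omega t\xi_3/|\xi|}$. Following \cite{FLX26}, I would use a Littlewood--Paley decomposition and stationary phase for the dispersive part. That part has degenerate phase $\xi_3/|\xi|$, which yields $L^1\to L^\infty$ decay $(|\Omega|t)^{-1}$ on each dyadic shell. Interpolating this against the $L^2$ bound only gives the rate $(1-2/p)$, so it is not enough. The sharper rates $\frac32(1-\frac2p)$ for $p<4$, the logarithm at $p=4$, and $(1-\frac1p)$ for $p>4$ must come from an $L^1\to L^p$ estimate computed directly. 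The heat factor localizes to $|\xi|\lesssim t^{-1/2}$. The amplitude near the degenerate set $\xi_3/|\xi|\approx\pm1$ then contributes a piece of measure $\sim(|\Omega|t)^{-1}$, balanced against the piece where the phase gives decay $(|\Omega|t)^{-3/2}$. The crossover of the resulting $L^p$ norms at $p=4$ produces the three regimes and the log. Having the $L^1$ datum is what gives the heat factor $t^{-\frac32(1-\frac1p)-\frac\ell2}$. The little-$o$ for the linear parts follows by density: approximate $u_0,B_0$ in $L^1$ by Schwartz functions with vanishing moments, or simply use $\|e^{t\Delta}f\|_{L^p}=o(t^{-\frac32(1-\frac1p)})$ for $f\in L^1$.

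\textbf{Nonlinear terms.} The core is a bootstrap on weighted norms. For a small parameter, set
\begin{align}
M(t)=\sup_{0<\tau\leq t}\Bigl(\tau^{\frac34}\|B(\tau)\|_{L^2}\cdots+ w_p(\tau)^{-1}\|u(\tau)\|_{L^p}\Bigr),
\end{align}
where $w_p$ is the claimed rate. First I would establish $L^2$ decay $\|(u,B)(t)\|_{L^2}\lesssim (1+t)^{-3/4}$ by the Fourier splitting method, using $L^1$ data and energy inequality (the energy is finite since $u_0\in L^1\cap\dot H^{1/2}\subset L^2$). Then I would estimate the Duhamel integrals by splitting at $t/2$. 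On $[0,t/2]$, put the derivative on the kernel and use $L^1$ bounds of the quadratic terms, which are $\lesssim\|(u,B)\|_{L^2}^2$. The integral $\int_0^{t/2}\|u\otimes u-B\otimes B\|_{L^1}d\tau$ diverges only logarithmically at worst, because $\|\cdot\|_{L^2}^2\sim\tau^{-3/2}$ is integrable; so the kernel at $t-\tau\sim t$ gives the rate with one extra $t^{-1/2}$ from $\div$, which is better than needed. On $[t/2,t]$, use the $L^p$ bounds of the solution in the bootstrap together with heat smoothing.

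\textbf{Main obstacle.} The hard part is the velocity. The forcing $B\otimes B$ decays only like the magnetic field, with no rotation gain. In the $[t/2,t]$ piece the kernel $T_\Omega(t-\tau)$ can only give rotation gain in $|\Omega|(t-\tau)$, which is not in $|\Omega|t$. I would first try placing $B\otimes B$ in $L^1$ and using the $T_\Omega$ bound with the extra $(t-\tau)^{-1/2}$ of the divergence. The resulting factor $(|\Omega|(t-\tau))^{-\alpha}(t-\tau)^{-\frac32(1-\frac1p)-\frac12}$ is integrable near $\tau=t$ only for small $p$. Otherwise I would switch to $L^r$ norms of $B\otimes B$ with $r$ close to $p$ and exploit the higher decay $t^{-3/2\cdot}$ of $B\otimes B$ compared with the target. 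Here $|\Omega|\ge1$ large compensates, since the gap in $t$-decay exceeds $\frac32$. Higher derivatives $\ell$ follow by repeating the argument with parabolic smoothing on $[t/2,t]$. The $o(\cdot)$ statement is obtained by splitting the data into a small-$L^1$ part and a nice part, running the same estimates.
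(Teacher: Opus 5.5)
There is a genuine gap at exactly the step you flag as the main obstacle: the part $\int_{t/2}^t T_\Omega(t-\tau)\mathbb{P}\div(B\otimes B)(\tau)\,d\tau$ of the velocity Duhamel term. It comes from a wrong diagnosis. A gain in $|\Omega|(t-\tau)$ does produce a gain in $|\Omega|t$ once integrated. For $p<\infty$ the exponent in $\mathcal{D}_p$ is smaller than $1$, so
\[
\int_{t/2}^t\mathcal{D}_p(|\Omega|(t-\tau))\,d\tau=\int_0^{t/2}\mathcal{D}_p(|\Omega|s)\,ds\leq Ct\,\mathcal{D}_p(|\Omega|t)
\]
by Lemma~\ref{lemm:elem}; for $p=\infty$ one gets $C|\Omega|^{-1}\log(e+|\Omega|t)$ instead.

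What kills your first attempt is only the time singularity. It is created by letting the heat factor pay for $\nabla^{\ell+1}$ and for the $L^1\to L^p$ Bernstein loss. The exponent $\frac{3}{2}(1-\frac{1}{p})+\frac{1}{2}\geq\frac{5}{4}$ is non-integrable for every $p\geq 2$, not just for large $p$. The paper drops the heat factor and uses the frequency-localized dispersive bound of Remark~\ref{Rem}, which moves all derivatives onto the nonlinearity:
\[
\|\nabla^{\ell}T_\Omega(t-\tau)\mathbb{P}\div(B\otimes B)(\tau)\|_{L^p}\leq C\,\mathcal{D}_p(|\Omega|(t-\tau))\,\|(B\otimes B)(\tau)\|_{\dB^{\ell+1+3(1-\frac{1}{p})}_{1,1}}.
\]
By Lemma~\ref{lemm:prod-est}, Lemma~\ref{lemm:interp} and the $L^2$ decay of all derivatives (Lemma~\ref{lemm:L2-decay}), the last norm is
\[
\lesssim\|B\|_{L^2}\|B\|_{\dB^{\ell+1+3(1-\frac{1}{p})}_{2,1}}\lesssim(1+\tau)^{-\frac{3}{2}}\tau^{-\frac{\ell+1}{2}-\frac{3}{2}(1-\frac{1}{p})}.
\]
Integrating against $\mathcal{D}_p(|\Omega|(t-\tau))$ then gives $Ct^{-\frac{3}{2}(1-\frac{1}{p})-\frac{\ell+1}{2}}\mathcal{D}_p(|\Omega|t)$. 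At $p=\infty$ the constant depends on $\Omega$, and the surplus $(1+t)^{-3/2}$ absorbs the logarithm. No bootstrap or Strichartz smallness is needed. This is also how higher $\ell$ must be handled: ``parabolic smoothing'' would put a non-integrable factor $(t-\tau)^{-(\ell+1)/2}$ into the kernel.

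Your proposed workaround does not close. For $2<r\leq p$, the operator $e^{is\Omega\frac{D_3}{|D|}}$ restricted to a dyadic shell is not uniformly bounded from $L^r$ to $L^p$. Test it on $f=e^{-is\Omega\frac{D_3}{|D|}}\phi$ with $\widehat{\phi}$ supported in a fixed annulus. The $L^\infty$ case of Remark~\ref{Rem} and interpolation with $L^2$ give $\|f\|_{L^r}\lesssim(1+|\Omega|s)^{-(1-\frac{2}{r})}$, while the image is $\phi$. Hence the operator norm grows like $(|\Omega|s)^{1-\frac{2}{r}}$, so taking $r$ close to $p$ loses rotation decay rather than merely failing to gain it.

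Even granting uniform boundedness, the time gap is too small. The pointwise surplus $\tau^{-3/2}$ of $B\otimes B$ is eaten by the length of $[t/2,t]$, or by the $(t-\tau)^{-1/2}$ coming from the divergence. What remains is a net gain of only $t^{-1}$ over $t^{-\frac{3}{2}(1-\frac{1}{p})-\frac{\ell}{2}}$, not ``more than $\frac{3}{2}$''. At $p=\infty$ this exactly matches $\mathcal{D}_\infty(|\Omega|t)\sim(|\Omega|t)^{-1}$, so you would get $O(\cdot)$, not $o(\cdot)$. The size of $|\Omega|$ cannot make up a deficit in the power of $t$ as $t\to\infty$ with $\Omega$ fixed.

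The same objection applies to your $[t/2,t]$ treatment of $u\otimes u$, which the paper imports as Lemma~\ref{prop:nonl-dec}. A smaller point: $\|e^{t\Delta}f\|_{L^p}=o(t^{-\frac{3}{2}(1-\frac{1}{p})})$ is false for general $f\in L^1$. It requires $\int f=0$, which here follows from $\div f=0$.
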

\begin{rem}
It was proved by Kim \cite{Kim22} that the global solution to \eqref{eq:non}, see \eqref{decay-kim}-\eqref{decay-kim-B}, decays as
\begin{align}
    \n{u(t)}_{L^q}=O \sp{ t^{-\frac{3}{2}(1-\frac{1}{q})-(1-\frac{2}{q}) }  }, \quad
      \n{B(t)}_{L^q}=O \sp{ t^{-\frac{3}{2}(1-\frac{1}{q}) }  }, \quad t\rightarrow \infty
\end{align}
for suitable $2\leq q <3$. Theorem \ref{thm-1} gives an improved decay estimate: 
\begin{align}\label{non-dec-L^p-rem}
      &  \n{ u(t)}_{L^p} = 
        \begin{cases}
            o\sp{t^{-\frac{3}{2}(1-\frac{1}{p})}
           t^{-\frac{3}{2}(1-\frac{2}{p})}} & (2 \leq p <4), \\
            o\sp{t^{-\frac{3}{2}(1-\frac{1}{4})}
            t^{-\frac{3}{4}}\sp{\log t}^{\frac{1}{4}}} & (p = 4), \quad t\rightarrow \infty \\
            o\sp{t^{-\frac{3}{2}(1-\frac{1}{p})}
            t^{-(1-\frac{1}{p})}} & (4<p\leq \infty) , \quad 
        \end{cases} \\
        & \n{B(t)}_{L^p} = o\sp{   t^{-\frac{3}{2}(1-\frac{1}{p})}  } ,
        \quad t\rightarrow \infty
    \end{align}
    for all $2\leq p \leq \infty$. Hence, we improve the decay rates obtained by Kim \cite{Kim22}. 
\end{rem}

Similar to the incompressible (rotating) Navier--Stokes equations \cites{ET23,Fuj-Miya-01,FLX26} and incompressible MHD equations \cite{L24}, the decay rates obtained above may be improved to the additional $t^{-\frac{1}{2}}$-faster decay estimates, provided that $|x|u_0(x)$ and $|x|B_0(x)$ are integrable.
\begin{thm}\label{thm-2}
    In addition to the hypotheses of Theorem \ref{thm-1}, assume moreover that $|x|u_0(x), |x|B_0(x) \in L^1(\R^3_x)$.
    Then, for every $2 \leq p \leq \infty$ and $\ell \in \N \cup \{0\}$, 
    there exists a positive constant $C=C(\ell,p,u_0,B_0)$ such that the global solution $(u,B)$ to \eqref{eq:non} with $|\Omega| \geq \Omega_0$, constructed in Theorem \ref{thm-1}, satisfies the $1/2$-enhanced decay estimates
    \begin{align}
        &
        \n{\nabla^{\ell} u(t)}_{L^p}
        \leq
        \begin{cases}
            Ct^{-\frac{3}{2}(1-\frac{1}{p})-\frac{\ell+1}{2}}(1+|\Omega|t)^{-\frac{3}{2}(1-\frac{2}{p})} & (2 \leq p <4),\\
            Ct^{-\frac{3}{2}(1-\frac{1}{4})-\frac{\ell+1}{2}}(1+|\Omega|t)^{-\frac{3}{4}}\sp{\log(e+|\Omega|t)}^{\frac{1}{4}} & (p = 4), \label{enh-non-dec-L^p}  \\
            Ct^{-\frac{3}{2}(1-\frac{1}{p})-\frac{\ell+1}{2}}(1+|\Omega|t)^{-(1-\frac{1}{p})} & (4<p\leq \infty),  
        \end{cases} \\
        & 
        \n{\nabla^{\ell} B(t)}_{L^p} \leq C   t^{-\frac{3}{2}(1-\frac{1}{p})-\frac{\ell+1}{2}}  \label{non-dec-B}
    \end{align}
    for all $t \geq 1$. For $p=\infty$, the constant $C$ in \eqref{enh-non-dec-L^p} and \eqref{non-dec-B} additionally depends on $\Omega$. Furthermore, the logarithmic correction for $p=4$ may be removed in $L^{4,\infty}$ framework:
    \begin{align}\label{corrc-4}
        \n{\nabla^{\ell} u(t)}_{L^{4,\infty}}
        \leq 
        C
        t^{-\frac{3}{2}(1-\frac{1}{4})-\frac{\ell+1}{2}}
        (1+|\Omega|t)^{-\frac{3}{4}}
    \end{align}
    for all $t \geq 1$.
\end{thm}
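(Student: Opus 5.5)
The proof follows the Duhamel-based scheme that gives Theorem \ref{thm-1}. The extra moment assumption buys one additional factor $t^{-1/2}$ in the linear part, and a bootstrap on the nonlinear part propagates it. We write the solution as
\begin{align}
u(t)=T_\Omega(t)u_0-\int_0^t T_\Omega(t-\tau)\mathbb{P}\,\div\sp{u\otimes u-B\otimes B}(\tau)\dtau,\qquad
B(t)=e^{t\Delta}B_0-\int_0^t e^{(t-\tau)\Delta}\div\sp{u\otimes B-B\otimes u}(\tau)\dtau,
\end{align}
where $T_\Omega(t)$ is the Stokes--Coriolis semigroup, whose Fourier symbol is $e^{-|\xi|^2t}$ times the rotating factors $e^{\pm i\Omega t\xi_3/|\xi|}$.

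\textbf{Step 1 (linear part).} Since $\div u_0=0$, we have $\int u_0\,dx=\hat u_0(0)=0$ and $|\hat u_0(\xi)|\leq |\xi|\,\n{|x|u_0}_{L^1}$, and similarly for $B_0$. We insert this extra factor $|\xi|$ into the dispersive estimates for $T_\Omega$ used to prove Theorem \ref{thm-1}. These are stationary-phase bounds for $e^{i\Omega t\xi_3/|\xi|}$ on dyadic annuli, which give the gain $(1+|\Omega|t)^{-\frac32(1-\frac2p)}$ for $p<4$, the logarithm at $p=4$, and the gain $(1+|\Omega|t)^{-(1-\frac1p)}$ for $p>4$. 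With the extra factor $|\xi|$ they give the extra $t^{-1/2}$, that is, $\n{\nabla^\ell T_\Omega(t)u_0}_{L^p}$ is bounded by the right-hand side of \eqref{enh-non-dec-L^p}. The heat part gives \eqref{non-dec-B} for $e^{t\Delta}B_0$. For $L^{4,\infty}$ we interpolate, via real interpolation, the dispersive $L^1\to L^p$ bounds at exponents $p_0<4<p_1$. The two rates $\frac32(1-\frac2p)$ and $1-\frac1p$ coincide at $p=4$, so the logarithm disappears, which is \eqref{corrc-4}.

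\textbf{Step 2 (nonlinear Duhamel terms).} From Theorem \ref{thm-1} we already have the $o(\cdot)$ rates, and in particular smallness for large $t$. We define the weighted quantity
\begin{align}
M(T)=\sup_{1\leq t\leq T}\sum_{p\in\{2,\infty\}}\Big(t^{\frac32(1-\frac1p)+\frac12}\n{B(t)}_{L^p}+w_p(t)^{-1}\n{u(t)}_{L^p}\Big),
\end{align}
where $w_p(t)$ is the target rate. We split $\int_0^t=\int_0^{t/2}+\int_{t/2}^t$.

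On $[0,t/2]$ we write the nonlinearity as $\div F$ with $F=u\otimes u-B\otimes B$ (respectively $u\otimes B-B\otimes u$). We estimate $\nabla T_\Omega(t-\tau)$ from $L^1$ to $L^p$, which already carries the extra factor $(t-\tau)^{-1/2}$ together with the full dispersive gain, and use $\n{F(\tau)}_{L^1}\leq \n{(u,B)(\tau)}_{L^2}^2$. The $L^2$ decay $\n{(u,B)(\tau)}_{L^2}\lesssim (1+\tau)^{-3/4}$ makes $\int_0^\infty\n{F}_{L^1}\dtau$ finite. This is the crucial integrability of the $B\otimes B$ term, which carries no rotation gain.

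On $[t/2,t]$ we use $L^p$--$L^\infty$ Hölder bounds with the bootstrap norms. The factor $\n{B}_{L^\infty}\lesssim o(\tau^{-3/2})$ from Theorem \ref{thm-1}, integrated over an interval of length $t/2$ against $(t-\tau)^{-1/2}$, produces a small multiple of $M(T)$ for $t\geq t_0$. Hence $M(T)\leq C+\varepsilon M(T)$, and the estimates close. Higher derivatives $\nabla^\ell$ follow by placing derivatives on the semigroup in the first half and using the parabolic smoothing already established for $\nabla^\ell(u,B)$ in the second half. For $p=\infty$ the constant may depend on $\Omega$, which absorbs the loss near $|\Omega|t\sim1$.

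\textbf{Main obstacle.} The hardest part is the velocity estimate: the term $B\otimes B$ inside $T_\Omega$ must produce both the dispersive gain $(1+|\Omega|t)^{-\cdots}$ and the extra $t^{-1/2}$, even though $B$ itself decays without any rotation gain. On $[0,t/2]$ this works because the gain comes entirely from the kernel applied in $L^1$. On $[t/2,t]$, however, the kernel acts for only a time $t-\tau$, so its dispersion is too weak. I would first try to treat that region with the $L^1\to L^p$ dispersive estimate and the bound $\n{B\otimes B(\tau)}_{L^1}\lesssim\tau^{-5/2}$ from the enhanced $L^2$ decay of $B$. Since $\int_{t/2}^t (t-\tau)^{-\frac12-\frac32(1-\frac1p)}(1+|\Omega|(t-\tau))^{-\alpha}\dtau$ may diverge near $\tau=t$ for large $p$, I would cut at $t-\tau=1$ and use $L^\infty$ bounds on the final unit interval. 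There the factor $\tau^{-5/2}$ beats the target rate provided $\frac52\geq\frac32(1-\frac1p)+\frac12+\alpha_p$ with $\alpha_p\leq1-\frac1p$. This inequality holds, at most with equality at $p=\infty$, which is consistent with the $\Omega$-dependent constant in that case.
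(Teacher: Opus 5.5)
You handle $\int_0^{t/2}$ and the magnetic equation as the paper does. There is, however, a genuine gap in the near-diagonal part $\int_{t/2}^{t}$ of the velocity Duhamel terms, and neither of your two routes closes it.

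\textbf{The H\"older route.} It uses $\n{\nabla T_\Om(s)\mathbb{P}F}_{L^p}\lesssim s^{-1/2}\n{F}_{L^p}$ uniformly in $\Om$, and this is not available for $p\neq 2$. On a dyadic block, the kernel of $e^{i\Om s\frac{D_3}{|D|}}$ has $L^2$ norm of order one and $L^\infty$ norm $\lesssim(1+|\Om|s)^{-1}$. Hence its $L^1$ norm, which is the $L^\infty\to L^\infty$ operator norm, is $\gtrsim 1+|\Om|s$. Even granting such a bound, the route yields no factor $\mathcal{D}_p(|\Om|t)$ from $B\otimes B$. Then the absorption threshold $t_0$ and the constant depend on $\Om$, whereas the statement requires $\Om$-independence for $p<\infty$.

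\textbf{The fallback.} Take your $L^1\to L^p$ bound with $\n{B\otimes B(\tau)}_{L^1}\lesssim\tau^{-5/2}$. Already the stretch $1\leq t-\tau\leq 2$ contributes about $t^{-5/2}(1+|\Om|)^{-\alpha_p}$. The target is $t^{-\frac32(1-\frac1p)-\frac12}(1+|\Om|t)^{-\alpha_p}$. Dominating the former by the latter requires exactly your inequality $\frac52\geq\frac32(1-\frac1p)+\frac12+\alpha_p$, and this holds only for $p\leq 5$. At $p=\infty$ the right-hand side is $3$, not $\frac52$. The resulting loss $t^{1/2}$ cannot be absorbed into an $\Om$-dependent constant.

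\textbf{The missing idea.} You need to put \emph{all} derivatives on the source, so that the kernel has no singularity at $\tau=t$ and only $\mathcal{D}_p(|\Om|(t-\tau))$ remains. By the frequency-localized bound of Remark~\ref{Rem},
\[
\n{\nabla^{\ell}T_\Om(t-\tau)\mathbb{P}\div F}_{L^p}\lesssim\mathcal{D}_p(|\Om|(t-\tau))\,\n{F}_{\dB^{\ell+1+3(1-\frac1p)}_{1,1}}.
\]
Lemmas~\ref{lemm:prod-est} and~\ref{lemm:interp}, with only the unenhanced derivative decay of Lemma~\ref{lemm:L2-decay}, then give $\n{B\otimes B}_{\dB^{\ell+1+3(1-\frac1p)}_{1,1}}\lesssim(1+\tau)^{-\frac32}\tau^{-\frac{\ell+1}{2}-\frac32(1-\frac1p)}$. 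Finally, Lemma~\ref{lemm:elem} gives $\int_0^{t/2}\mathcal{D}_p(|\Om|s)\,ds\lesssim t\,\mathcal{D}_p(|\Om|t)$. This transfers the full rotation gain uniformly in $\Om$ for $p<\infty$; at $p=\infty$ the logarithmic loss is absorbed into $C_\Om$. The $u\otimes u$ term is imported as Lemma~\ref{prop:nonl-dec}. No bootstrap is needed: the velocity is estimated first, and $B$ then follows from heat-kernel bounds.

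\textbf{The $L^{4,\infty}$ step.} Separately, this step fails as stated. The exponent of $\mathcal{D}_p$ is piecewise linear and concave in $1/p$, with slopes $-1$ and $-3$ meeting at $1/p=\frac14$. Real interpolation between the $L^1\to L^{p_0}$ and $L^1\to L^{p_1}$ bounds with $p_0<4<p_1$ therefore lands on a chord strictly below $\frac34$. For $p_0=2$ and $p_1=\infty$ it gives only $(1+|\Om|t)^{-1/2}$. Removing the logarithm requires a weak-type summation over the individual dispersive pieces, which is what the $L^{4,\infty}$ bound in Remark~\ref{Rem} supplies.
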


\section{Preliminaries}

To begin with, we recall some function spaces that will be used in the sequel. 
We denote by $\mathscr{S}(\R^3)$ the space of all Schwartz functions on $\R^3$ and $\mathscr{S}'(\R^3)$ the set of all tempered distributions on $\R^3$. 
For any $f\in \mathscr{S}(\R^3)$, we define the Fourier transform and inverse Fourier transform by
\begin{align}
\mathscr{F}[f](\xi)=\widehat{f}(\xi)=
\int_{\R^3} e^{- i x\cdot \xi} f(x)\,dx, \qquad
\mathscr{F}^{-1}[f](x)= (2\pi)^{-3} \int_{\R^3} e^{ i x\cdot \xi} f(\xi)\,d\xi.
\end{align} 
Let $\psi \in C_c^{\infty}([0,\infty);[0,1])$  satisfy
$\supp \psi \subset [2^{-1},2]$ and $\sum_{j \in \Z} \psi(2^{-j}r) = 1$ for all $r>0$.
The dyadic localization operators $\{\Delta_j\}_{j \in \Z}$ is defined by
\begin{align}
    \Delta_j f := \mathscr{F}^{-1}\lp{\widehat\varphi_j(\xi)\widehat{f}(\xi)},
    \qquad
    \widehat {\varphi_j}(\xi):=\psi(2^{-j}|\xi|).
\end{align}
For any $s\in \R,1\leq p,\sigma\leq \infty$, we define the homogeneous Besov space $\dB^{s}_{p,\sigma}(\R^3)$ by 
\begin{align}
 \dB^{s}_{p,\sigma}(\R^3)&
 := \left\{  f \in \mathscr{S}'(\R^3)/\mathscr{P}(\R^3) ;\,
\n{f}_{  \dB^{s}_{p,\sigma} } <\infty 
\right\}, \\
\n{f}_{  \dB^{s}_{p,\sigma} } & :=
\n{
\Mp{ 2^{sj} \n{\Delta_j f}_{L^p} }_{j \in \Z}
}_{\ell^{\sigma}(\Z)}, 
\end{align} 
where $\mathscr{P}(\R^3)$ denotes the set of all polynomials on $\R^3$. When $p=\sigma=2$, we identify $\dB^{s}_{2,2}(\R^3)$ with the homogeneous Sobolev space $\dH^{s}(\R^3)$ in the sense of norm equivalence. We refer to \cite{Bah-Che-Dan-11} for more details on Besov spaces. 
Finally, we recall the definition of weak Lebesgue spaces. For $1\leq q< \infty$, the weak Lebesgue space $L^{q,\infty}(\R^3)$ consists of all measurable functions $f$ on $\R^3$ such that
\begin{align}
    \n{f}_{L^{q,\infty}}
    :=
    \sup_{\lambda>0} 
    \lambda
    \abso{\{x \in \R^3;\, |f(x)|>\lambda  \}}^\frac{1}{q}
    <\infty, 
\end{align} 
It is well-known that $L^q(\R^3)\subsetneq L^{q,\infty}(\R^3)$ when $1\leq q<\infty$.

Next, we recall a real interpolation inequality for Besov spaces.
\begin{lemm}[\cite{Bah-Che-Dan-11}*{Proposition 2.22}]\label{lemm:interp}
    Let $1\leq p \leq \infty$ and $s_0,s_1 \in \R$ with $s_0<s_1$. Let $0<\theta<1$. Then there exists a positive constant $C=C(s_1,s_2,\theta)$ such that
    \begin{align}
        \n{f}_{\dB_{p,1}^{\theta s_1 + (1-\theta)s_0}} 
        \leq 
        C
        \n{f}_{\dB_{p,\infty}^{s_1}}^{\theta}
        \n{f}_{\dB_{p,\infty}^{s_0}}^{1-\theta} 
    \end{align}
    for all $f \in \dB_{p,\infty}^{s_1}(\R^3) \cap \dB_{p,\infty}^{s_2}(\R^3)$. 
\end{lemm}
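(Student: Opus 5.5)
The statement to prove is the last one displayed: the real interpolation inequality of Lemma~\ref{lemm:interp}, which bounds the $\dB^{s_\theta}_{p,1}$ norm with $s_\theta:=\theta s_1+(1-\theta)s_0$ by the product of the two $\dB_{p,\infty}$ norms. (The hypothesis should read $f\in \dB^{s_1}_{p,\infty}\cap \dB^{s_0}_{p,\infty}$, and the constant depends on $s_0,s_1,\theta$.) The plan is to split the dyadic sum defining the $\dB^{s_\theta}_{p,1}$ norm at a frequency threshold $N\in\Z$, use a different endpoint norm on each side, and optimize $N$.

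For each $j\in\Z$ we have, by definition of the Besov norms,
\begin{align}
  2^{s_\theta j}\n{\Delta_j f}_{L^p}
  \leq 2^{(s_\theta-s_0)j}\n{f}_{\dB^{s_0}_{p,\infty}},
  \qquad
  2^{s_\theta j}\n{\Delta_j f}_{L^p}
  \leq 2^{-(s_1-s_\theta)j}\n{f}_{\dB^{s_1}_{p,\infty}}.
\end{align}
Since $s_\theta-s_0=\theta(s_1-s_0)>0$ and $s_1-s_\theta=(1-\theta)(s_1-s_0)>0$, we use the first bound for $j\leq N$ and the second for $j>N$. Both are geometric series, so
\begin{align}
  \n{f}_{\dB^{s_\theta}_{p,1}}
  \leq C\sp{2^{\theta(s_1-s_0)N}\n{f}_{\dB^{s_0}_{p,\infty}}
  +2^{-(1-\theta)(s_1-s_0)N}\n{f}_{\dB^{s_1}_{p,\infty}}},
\end{align}
with $C$ depending only on $\theta$ and $s_1-s_0$.

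Next we choose $N$. Assume $f\neq 0$; otherwise there is nothing to prove. Let $N$ be the integer for which $2^{(s_1-s_0)N}$ is comparable, within a factor $2^{s_1-s_0}$, to the ratio $\n{f}_{\dB^{s_1}_{p,\infty}}/\n{f}_{\dB^{s_0}_{p,\infty}}$; this ratio is positive and finite because $f$ is a nonzero element modulo polynomials. Substituting this $N$, each of the two terms becomes bounded by a constant multiple of $\n{f}_{\dB^{s_1}_{p,\infty}}^{\theta}\n{f}_{\dB^{s_0}_{p,\infty}}^{1-\theta}$, which is the claim.

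There is no real obstacle. The only care needed is that $N$ must be an integer, and rounding it costs only a constant factor depending on $s_1-s_0$ and $\theta$.
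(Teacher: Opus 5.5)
Your argument is correct and is the standard proof. You split the dyadic $\ell^1$ sum at a threshold $N$, bound the low frequencies by $\n{f}_{\dB^{s_0}_{p,\infty}}$ and the high frequencies by $\n{f}_{\dB^{s_1}_{p,\infty}}$, and choose $2^{(s_1-s_0)N}\approx \n{f}_{\dB^{s_1}_{p,\infty}}/\n{f}_{\dB^{s_0}_{p,\infty}}$; this is the proof of Proposition~2.22 in Bahouri--Chemin--Danchin, which the paper cites without reproducing, and your corrections that the hypothesis should read $f\in\dB^{s_1}_{p,\infty}\cap\dB^{s_0}_{p,\infty}$ and that $C$ depends only on $s_1-s_0$ and $\theta$ are also right.
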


We also recall the basic para-product estimate in Besov spaces. 
\begin{lemm}[\cite{FLX26}*{Lemma 2.3}]\label{lemm:prod-est}
    For $s>0$, $1\leq p,p_1,p_2,\sigma\leq \infty$ with $1/p  = 1/p_1 + 1/p_2$, there exists a positive constant $C=C(s,p,p_1,p_2,\sigma)$ such that 
    \begin{align}
        \n{fg}_{ \dB^{s}_{p,\sigma}}
        \leq 
        C \sp{
        \n{f}_{L^{p_1}} 
        \n{g}_{\dB^{s}_{p_2,\sigma}} 
        + 
        \n{f}_{\dB_{p_2,\sigma}^s}\n{g}_{L^{p_1}} 
        }
    \end{align} 
    for all  $f,g\in L^{p_1}(\R^3) \cap \dB^{s}_{p_2,\sigma}(\R^3)$.
\end{lemm}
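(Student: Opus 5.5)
This is the standard paraproduct estimate, so I would prove it through Bony's decomposition, following the proof of \cite{FLX26}*{Lemma 2.3}. Write
\begin{align}
    fg = T_f g + T_g f + R(f,g),
\end{align}
with $T_f g := \sum_{j} S_{j-1} f \, \Delta_j g$, where $S_{j-1} := \sum_{k \leq j-2} \Delta_k$, and $R(f,g) := \sum_{|j-k|\leq 1} \Delta_k f\, \Delta_j g$. I estimate each piece in $\dB^{s}_{p,\sigma}$ using only H\"older's inequality with $1/p = 1/p_1+1/p_2$ and the uniform $L^r$-boundedness of $\Delta_j$ and $S_j$, which holds because their kernels $\varphi_j$ are uniformly bounded in $L^1$. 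Strictly, $L^{p_1}$ functions need not lie in $\mathscr{S}'/\mathscr{P}$ with convergent low-frequency sums, so I would first take $f,g\in\mathscr{S}$ with Fourier support away from the origin and then argue by density.

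\textbf{Paraproduct terms.} The Fourier support of $S_{j-1}f\,\Delta_j g$ lies in an annulus $\{|\xi| \sim 2^j\}$. Hence $\Delta_m(T_f g)$ involves only the indices $|j-m|\leq N_0$ for a fixed integer $N_0$. Therefore
\begin{align}
    2^{sm}\n{\Delta_m T_f g}_{L^p} \leq C\sum_{|j-m|\leq N_0} \n{S_{j-1}f}_{L^{p_1}}\, 2^{sj}\n{\Delta_j g}_{L^{p_2}} \leq C\n{f}_{L^{p_1}} \sum_{|j-m|\leq N_0} 2^{sj}\n{\Delta_j g}_{L^{p_2}}.
\end{align}
Taking the $\ell^\sigma$ norm in $m$ gives $C\n{f}_{L^{p_1}}\n{g}_{\dB^{s}_{p_2,\sigma}}$, and this step needs no sign condition on $s$. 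By symmetry, $T_g f$ is bounded by $C\n{g}_{L^{p_1}}\n{f}_{\dB^s_{p_2,\sigma}}$.

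\textbf{Remainder.} This is the only step where $s>0$ is used. Each term $\Delta_k f\,\Delta_j g$ with $|j-k|\leq1$ has Fourier support in a ball $\{|\xi|\leq C2^j\}$, so $\Delta_m R(f,g)$ only sees $j \geq m - N_0$. Writing $\widetilde{\Delta}_j := \Delta_{j-1}+\Delta_j+\Delta_{j+1}$, I get
\begin{align}
    2^{sm}\n{\Delta_m R(f,g)}_{L^p} \leq C\sum_{j\geq m-N_0} 2^{s(m-j)}\, \n{\widetilde\Delta_j f}_{L^{p_1}}\, 2^{sj}\n{\Delta_j g}_{L^{p_2}}.
\end{align}
I bound $\n{\widetilde\Delta_j f}_{L^{p_1}}\leq C\n{f}_{L^{p_1}}$. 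Since $s>0$, the kernel $2^{s(m-j)}\mathbf{1}_{j\geq m-N_0}$ is summable, so Young's inequality on $\Z$ gives the bound $C\n{f}_{L^{p_1}}\n{g}_{\dB^s_{p_2,\sigma}}$. The only real subtlety is this summation and the resulting dependence of the constant on $s$ through $\sum_{k\geq -N_0}2^{-sk}$; the rest is routine. Adding the three contributions proves Lemma \ref{lemm:prod-est}.
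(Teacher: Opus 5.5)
The paper gives no proof of this lemma and simply cites \cite{FLX26}*{Lemma 2.3}. Your Bony-decomposition argument is the standard proof of this estimate and is correct: H\"older plus uniform $L^r$-bounds handle the paraproducts, and $s>0$ with Young's inequality on $\Z$ handles the remainder.

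Two small remarks. First, with the paper's partition ($\supp\psi\subset[2^{-1},2]$), the product $S_{j-1}f\,\Delta_j g$ is only supported in the ball $\{|\xi|\leq 5\cdot 2^{j-1}\}$, not in an annulus. You can fix this by using $\sum_{k\leq j-3}\Delta_k f$ in the paraproduct (and $|j-k|\leq 2$ in the remainder), or by estimating $T_f g$ and $T_g f$ exactly like your remainder, which only needs ball support since $s>0$. Second, for $p_1<\infty$ no density step is needed, because $L^{p_1}\subset\mathscr{S}'_h$ makes Bony's identity hold directly in $\mathscr{S}'$; norm density of Schwartz functions would fail anyway when $\sigma=\infty$.
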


Next, we recall the global well-posedness of \eqref{eq:non} in critical Sobolev space $\dot{H}^{1/2}(\R^3)$ with large initial data, due to Takada and Yoneda \cite{Tak-Yon-24}. 
\begin{lemm}\label{lemm:GWP}
Assume that $u_0,B_0\in \dot{H}^{1/2}(\R^3)$ with $\div u_0=\div B_0=0$. Then, there exists a positive constant $\Omega_0=\Omega_0(u_0,B_0)$ such that \eqref{eq:non} admits a unique global solution $(u,B)$ in the class
\begin{align}
    u,B \in C([0,\infty); \dot{H}^{\frac{1}{2}}(\R^3) )
    \cap L^2(0,\infty; \dot{H}^{\frac{3}{2}}(\R^3) )
\end{align}
for all $\Omega \in \R $ satisfying $|\Omega|\geq \Omega_0$. 
\end{lemm}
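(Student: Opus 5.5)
We plan to prove Lemma~\ref{lemm:GWP} by a perturbation argument around an explicit approximate solution. The guiding observation is that every quadratic term in \eqref{eq:non} contains $u$, except $(B\cdot\nabla)B$. That term enters only the $u$-equation, where the Coriolis term turns the linear evolution into the dispersive rotating Stokes semigroup $T_\Omega(t)$. We therefore take as reference profile
\begin{align}
    u^{\rm app}(t) := T_\Omega(t)u_0 - \int_0^t T_\Omega(t-\tau)\mathbb{P}\div\bigl(e^{\tau\Delta}B_0\otimes e^{\tau\Delta}B_0\bigr)\dtau,
    \qquad
    B^{\rm app}(t) := e^{t\Delta}B_0 ,
\end{align}
where $\mathbb{P}$ denotes the Leray projection. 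We then seek $u=u^{\rm app}+v$, $B=B^{\rm app}+b$.

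The first step is to show that $u^{\rm app}$ is small in a critical space-time norm with good product properties once $|\Omega|$ is large, although it is large in $L^\infty(\dH^{1/2})$. A natural choice is $L^4(0,\infty;\dot H^{1})$, or a Strichartz-type norm $L^{\theta}(0,\infty;\dB^{s}_{p,2})$ of critical scaling with $p>2$. We will use the Strichartz estimates for $T_\Omega$, which gain a factor $|\Omega|^{-\delta}$ at the price of extra regularity $\dH^{1/2+\epsilon}$. To accommodate data only in $\dH^{1/2}$, we split $u_0$ by frequency truncation into a piece small in $\dH^{1/2}$ and a smooth piece in $\dH^{1/2}\cap\dH^{1/2+\epsilon}$. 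The smooth piece is made small by choosing $|\Omega|$ large, and the small piece is handled by the energy estimate. The forcing $e^{\tau\Delta}B_0\otimes e^{\tau\Delta}B_0$ is independent of $\Omega$ and belongs to a suitable $L^1$-in-time critical space. We treat it the same way: approximate it by a smooth forcing compactly supported in time and frequency, apply the inhomogeneous Strichartz estimate, and conclude that the Duhamel term tends to $0$ in the chosen norm as $|\Omega|\to\infty$. Since both $u_0$ and $B_0$ are merely fixed, this yields a threshold $\Omega_0=\Omega_0(u_0,B_0)$ that is not explicit.

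Next we derive the system for $(v,b)$. The $b$-equation is a heat equation containing terms linear in $b$ with coefficients $u^{\rm app}$ and $B^{\rm app}$, sources $(u^{\rm app}\cdot\nabla)B^{\rm app}-(B^{\rm app}\cdot\nabla)u^{\rm app}$, and quadratic terms. The $v$-equation contains only terms with at least one factor among $u^{\rm app}$, $v$, $b$. We run an $\dH^{1/2}$ energy estimate for $(v,b)$ together with the dissipation $L^2(\dH^{3/2})$. All terms involving the small quantity $u^{\rm app}$ and the sources are bounded by $\eta\,(1+\|B_0\|_{\dH^{1/2}})^2$, where $\eta$ is the smallness from the first step. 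The truly large linear terms, of the form $b\cdot\nabla B^{\rm app}$ and $B^{\rm app}\cdot\nabla b$, are handled by Gronwall's inequality. Their weight is $\int_0^\infty\|B^{\rm app}\|_{\dot H^{1}}^4\,dt\lesssim\|B_0\|_{\dH^{1/2}}^4$, which is finite although not small, so Gronwall yields an $\Omega$-independent constant $\exp(C\|B_0\|_{\dH^{1/2}}^4)$. A continuity (bootstrap) argument on $\|(v,b)\|_{L^\infty\dH^{1/2}\cap L^2\dH^{3/2}}$ then closes once $\eta\, e^{C\|B_0\|^4_{\dH^{1/2}}}$ is small, giving global existence. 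Local existence and uniqueness follow from the standard Fujita--Kato theory in $\dH^{1/2}$.

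We expect the main obstacle to be the first step, namely the smallness of the Duhamel term generated by $B^{\rm app}\otimes B^{\rm app}$. That forcing has only critical regularity, while the dispersive gain of $T_\Omega$ requires extra smoothness and finite time support. The density and approximation argument must be arranged so that the error, which is not small in $\Omega$, is measured in a norm where the non-dispersive energy bound for $T_\Omega$ suffices. A second delicate point is choosing the Strichartz exponents so that the coupling terms $(B\cdot\nabla)u$ and $(u\cdot\nabla)B$ in the $b$-equation can be estimated by the $L^\theta_t$ norm of $u^{\rm app}$ while staying compatible with the $\dH^{1/2}$ energy method.
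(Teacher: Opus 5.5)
The paper does not prove this lemma; it quotes it from Takada and Yoneda. Your proposal is therefore a genuinely different route: a self-contained perturbative argument in the spirit of Chemin--Desjardins--Gallagher--Grenier, and in outline it is sound. The only quadratic interaction without a factor $u$ is $(B\cdot\nabla)B$. Feeding its $e^{t\Delta}B_0$-part into the dispersive velocity equation leaves a system for $(v,b)$ in which the only non-small terms are linear in $(v,b)$. Their coefficients are $B^{\rm app}$ and $u^{\rm app}$, whose $L^4(0,\infty;\dot H^1)$ norms are bounded independently of $\Om$. The Coriolis term drops out of the $\dH^{1/2}$ energy identity, so Gronwall gives an $\Om$-independent amplification factor. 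Smallness in $\Om$ is then needed only for the pure sources $u^{\rm app}\otimes u^{\rm app}$ and $u^{\rm app}\otimes B^{\rm app}$ in $L^2(0,\infty;\dH^{1/2})$. Citing buys the authors brevity. Your route uses only tools already in the paper (the dispersive bound of Remark~\ref{Rem}, heat smoothing, energy estimates) and shows why $\Omega_0$ depends on the profile of $(u_0,B_0)$ and not only on their norms. After your density reduction you do not even need inhomogeneous Strichartz estimates. For data and forcing that are Schwartz, frequency-supported in a fixed annulus, and (for the forcing) compactly supported in time, Remark~\ref{Rem} and the heat factor give pointwise bounds $C\,\mathcal{D}_p(|\Om|t)e^{-ct}$. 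Dominated convergence then yields smallness in every $L^\theta_t$ as $|\Om|\to\infty$, $p>2$.

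Three points must be corrected.

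\textbf{Sign.} Since $-(B\cdot\nabla)B$ sits on the left-hand side of \eqref{eq:non}, the Duhamel term enters $u$ with a plus sign, as in the paper's Duhamel formula for $u$. With your minus sign, the $v$-equation retains the large, undispersed source $2\mathbb{P}\div(B^{\rm app}\otimes B^{\rm app})$ and the scheme collapses.

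\textbf{Smallness norm.} $L^4(0,\infty;\dot H^1)$ cannot serve. For divergence-free $f$ one has $\widehat f(\xi)\perp\xi$, and on $\xi^\perp$ the matrices $P_\pm(\xi)$ are complementary orthogonal projections. Hence $|\mathscr{F}[T_\Om(t)f](\xi)|=e^{-t|\xi|^2}|\widehat f(\xi)|$, and $\|T_\Om u_0\|_{L^4\dot H^1}=\|e^{t\Delta}u_0\|_{L^4\dot H^1}$ does not improve as $|\Om|\to\infty$. You need $p>2$, and the norm must survive the product in $\dH^{1/2}$, where the high-frequency part of $u^{\rm app}$ carries the half derivative. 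For instance, $L^\theta(0,\infty;\dB^{1/2}_{p,2})$ with $2/\theta+3/p=3/2$ and $2<p<6$ works. The reason is that $\|fg\|_{\dH^{1/2}}\lesssim\|f\|_{\dB^{1/2}_{p,2}}\|g\|_{\dH^{3/p}}$. Moreover, $L^\infty\dH^{1/2}\cap L^2\dH^{3/2}$ embeds both into $L^{\theta'}\dH^{3/p}$ with $1/\theta'=1/2-1/\theta$, and into $L^\theta\dB^{1/2}_{p,2}$. The latter embedding is what controls your density errors.

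\textbf{Bookkeeping of the large terms.} In the induction equation every term contains $u$, so $B^{\rm app}$ multiplies $v$ there, not $b$. The large linear terms are the cross couplings $(B^{\rm app}\cdot\nabla)b+(b\cdot\nabla)B^{\rm app}$ in the $v$-equation and $(v\cdot\nabla)B^{\rm app}-(B^{\rm app}\cdot\nabla)v$ in the $b$-equation. Gronwall on the joint energy of $(v,b)$ with weight $\|B^{\rm app}\|_{\dot H^1}^4$ handles all of them, so nothing breaks, but the bookkeeping should be stated this way.
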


To continue, we recall the decay estimates of global solutions to \eqref{eq:non} in $L^2$. 
\begin{lemm}\label{lemm:L2-decay}
In addition to the assumptions of Lemma \ref{lemm:GWP}, assume moreover that $(u_0,B_0)\in L^1(\R^3)$. Then the global solution obtained by Lemma \ref{lemm:GWP} belongs to
\begin{align}
    u,B \in C([0,\infty); H^{\frac{1}{2}}(\R^3) )
    , \quad 
     \nabla u, \nabla B \in  L^2(0,\infty; H^{\frac{1}{2}}(\R^3) )
\end{align}
for all $\Omega \in \R $ satisfying $|\Omega|\geq \Omega_0$. Moreover, for any $\ell \in \mathbb{N}$, there exists a positive constant $C_{\ell}$, depending only on $\n{u_0}_{L^1},\n{B_0}_{L^1},\n{u_0}_{\dot{H}^{1/2}}, \n{B_0}_{\dot{H}^{1/2}} $, such that
\begin{align}
\n{\nabla^{\ell} u (t)}_{L^2} \leq C_{\ell} (1+t)^{-\frac{3}{4}} t^{-\frac{\ell}{2} }, \quad 
\n{\nabla^{\ell} B (t)}_{L^2} \leq C_{\ell} (1+t)^{-\frac{3}{4}} t^{-\frac{\ell}{2} }
\end{align}
for all $t>0$. 
\end{lemm}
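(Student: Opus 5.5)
The plan is the classical Fourier splitting method of Schonbek, combined with the energy structure of \eqref{eq:non}. The key structural fact is that the Coriolis term does no work: $(\Omega e_3\times u)\cdot u=0$. Likewise the magnetic coupling terms cancel after integration by parts, since $\int (B\cdot\nabla)B\cdot u\,dx+\int (B\cdot\nabla)u\cdot B\,dx=0$ by $\div B=0$.

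\textbf{Step 1: energy inequality and Fourier splitting.} Test the equations against $(u,B)$ to obtain
\[
\frac{d}{dt}\n{(u,B)}_{L^2}^2+2\n{\nabla(u,B)}_{L^2}^2=0 .
\]
This is justified because the solution of Lemma \ref{lemm:GWP} lies in $C([0,\infty);\dot H^{1/2})\cap L^2(0,\infty;\dot H^{3/2})$. Once $L^2$ membership is propagated, as below, the trilinear terms are controlled by $\n{u}_{L^3}\n{\nabla u}_{L^2}\n{u}_{L^6}$ via $\dot H^{1/2}\subset L^3$. Split frequency space by $S(t)=\{|\xi|\leq \rho(t)\}$ with $\rho(t)^2=k/(2(1+t))$. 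This gives
\[
\frac{d}{dt}\Bigl[(1+t)^k\n{(u,B)}_{L^2}^2\Bigr]\leq C(1+t)^{k-1}\int_{S(t)}|\widehat{(u,B)}(t,\xi)|^2\,d\xi .
\]

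\textbf{Step 2: low-frequency bound.} Write the Fourier-side Duhamel formula. The linear part has symbol $e^{-t|\xi|^2}$ times the unitary rotation group (or identity for $B$), so it is bounded by $\n{(u_0,B_0)}_{L^1}$. The nonlinear terms are in divergence form, $\nabla\cdot(u\otimes u-B\otimes B)$ and $\nabla\cdot(u\otimes B-B\otimes u)$, and the Leray projector is bounded on the Fourier side. This yields
\[
|\widehat{(u,B)}(t,\xi)|\leq C\n{(u_0,B_0)}_{L^1}+C|\xi|\int_0^t\n{(u,B)(\tau)}_{L^2}^2\,d\tau .
\]
The integral grows at most like $t$ by energy monotonicity; that requires the $L^2$ norm to be finite, which we address in Step 3. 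Inserting this into Step 1 first gives a crude rate, and bootstrapping then gives $\n{(u,B)(t)}_{L^2}\leq C(1+t)^{-3/4}$. The bootstrap works because the bound $\int_0^t (1+\tau)^{-3/2}\,d\tau\leq C$ makes the nonlinear contribution $|\xi|^2\cdot O(1)$, which is negligible on $S(t)$.

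\textbf{Step 3: $L^2$ membership and higher derivatives.} We need $u,B\in C([0,\infty);L^2)$. Since $L^1\cap\dot H^{1/2}\subset L^2$ fails in general, use the low-frequency bound $|\widehat{u_0}|\leq\n{u_0}_{L^1}$, which makes $\int_{|\xi|\leq1}|\widehat{u_0}|^2\,d\xi$ finite; the high frequencies are controlled by $\dot H^{1/2}$. So $u_0,B_0\in L^2$, and the energy identity propagates this, giving $\nabla u,\nabla B\in L^2(0,\infty;H^{1/2})$ after combining with Lemma \ref{lemm:GWP}. For $\nabla^\ell$, first use parabolic smoothing: the solution is smooth for $t>0$ and small in critical norms for $t\geq T_0$, because $\int_0^\infty\n{u}_{\dot H^{3/2}}^2\,dt<\infty$, so $\n{(u,B)(t)}_{\dot H^{1/2}}$ becomes small at some time and then stays small. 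Then run higher-order energy estimates for $\n{\nabla^\ell(u,B)}_{L^2}^2$; the rotation again drops out since it commutes with derivatives. Close them with the weight $t^{\ell}(1+t)^{3/2}$ by induction on $\ell$, where the nonlinear terms are absorbed using the smallness in $\dot H^{1/2}$. Near $t=0$ use the standard $t^{-\ell/2}$ smoothing.

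\textbf{Main obstacle.} I expect the hardest step to be the higher-derivative decay with constants depending only on the four stated norms. Obtaining the eventual smallness time uniformly requires quantitative control, and I would try to extract it from the $L^2_t\dot H^{3/2}$ bound of Lemma \ref{lemm:GWP}. The Fourier splitting itself is routine.
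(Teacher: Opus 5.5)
Your proposal takes essentially the same route as the paper, which omits the details and appeals to the classical Fourier splitting argument for the MHD system, plus the observation that the Coriolis term does no work in any $L^2$-based energy estimate, exactly as in your Steps 1--3. Two minor corrections: $L^1\cap\dot H^{1/2}\subset L^2$ does hold (your own low/high-frequency splitting proves it); and the eventual smallness time in Step 3 is controlled by $\|(u_0,B_0)\|_{L^2}$ alone, because $\|f\|_{\dot H^{1/2}}^2\leq C\|f\|_{L^2}\|\nabla f\|_{L^2}$ together with the energy inequality gives $\int_0^\infty\|(u,B)\|_{\dot H^{1/2}}^4\,dt\leq C\|(u_0,B_0)\|_{L^2}^4$, so you need not extract it from the $L^2_t\dot H^{3/2}$ bound of Lemma~\ref{lemm:GWP}.
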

We remark the the $L^2$ decay estimates in Lemma \ref{lemm:L2-decay} follows from the classical result on decay estimates of the standard incompressible MHD equations and Fourier's splitting method, see \cites{Ko89,MS1,SSS-96}. Notice that the rotational effect disappears in $L^2$ setting. A detailed proof is thus omitted.

Finally, we recall an elementary estimate from \cite{FLX26}*{Lemma 4.3}. 
\begin{lemm}\label{lemm:elem}
Let $\mathcal{D}_p(\cdot)$ be defined by \eqref{def-D}. For $2 \leq p \leq \infty$, there exists a positive constant $C=C(p)$ such that 
    \begin{align}
        \int_0^t \mathcal{D}_p(|\Om|s)\, ds \leq 
        \begin{cases}
            Ct\mathcal{D}_p(|\Om| t) & (2 \leq p<\infty), \\
            C|\Om|^{-1}\log (e + |\Om| t) & (p=\infty)
        \end{cases}
    \end{align}
    for all $t > 0$ and $\Om \in \R \setminus \{0\}$.
\end{lemm}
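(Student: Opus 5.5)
The plan is to rescale time so that $\Omega$ drops out, and then compare the integral of a power-type weight with its value at the endpoint. I assume that $\mathcal{D}_p$ in \eqref{def-D} is the rotational decay factor appearing in Theorem \ref{thm-1}:
\begin{align}
    \mathcal{D}_p(\tau)=
    \begin{cases}
        (1+\tau)^{-\frac{3}{2}(1-\frac{2}{p})} & (2\leq p<4),\\
        (1+\tau)^{-\frac{3}{4}}\sp{\log(e+\tau)}^{\frac{1}{4}} & (p=4),\\
        (1+\tau)^{-(1-\frac{1}{p})} & (4<p\leq \infty).
    \end{cases}
\end{align}
The argument uses only two features of this definition. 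First, $\mathcal{D}_p(\tau)=(1+\tau)^{-\alpha}L(\tau)$ with $0\leq \alpha<1$ for $p<\infty$. Second, $L$ is either $1$ or the nondecreasing factor $\sp{\log(e+\tau)}^{1/4}$, which satisfies $L(\tau)\geq 1$.

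For the first step, substitute $\tau=|\Omega|s$ and set $T:=|\Omega|t$. This gives
\begin{align}
    \int_0^t \mathcal{D}_p(|\Omega|s)\,ds=|\Omega|^{-1}\int_0^T\mathcal{D}_p(\tau)\,d\tau .
\end{align}
For $p<\infty$ the claimed bound $Ct\mathcal{D}_p(|\Omega|t)$ equals $C|\Omega|^{-1}T\mathcal{D}_p(T)$. It therefore suffices to prove $\int_0^T\mathcal{D}_p(\tau)\,d\tau\leq C\,T\mathcal{D}_p(T)$ for all $T>0$, with $C=C(p)$.

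For the second step, split according to the size of $T$.

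If $0<T\leq 1$, then $\mathcal{D}_p$ is bounded above and below by positive constants on $[0,1]$. Hence both sides are comparable to $T$.

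If $T\geq 1$, bound $L(\tau)\leq L(T)$ by monotonicity. Then integrate explicitly:
\begin{align}
    \int_0^T(1+\tau)^{-\alpha}\,d\tau\leq \frac{(1+T)^{1-\alpha}}{1-\alpha}\leq \frac{2}{1-\alpha}\,T(1+T)^{-\alpha}.
\end{align}
The last inequality uses $1+T\leq 2T$. Multiplying back by $L(T)$ gives the claim. The constant blows up as $\alpha\to1$, but $\alpha$ is fixed for fixed $p<\infty$: $\alpha<3/4$ for $p<4$, $\alpha=3/4$ for $p=4$, and $\alpha=1-1/p<1$ for $p>4$. This is why $C$ depends on $p$.

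For $p=\infty$ we have $\alpha=1$, and the power integral is no longer comparable to $T\mathcal{D}_\infty(T)$. Instead, compute directly:
\begin{align}
    |\Omega|^{-1}\int_0^T\frac{d\tau}{1+\tau}=|\Omega|^{-1}\log(1+T)\leq|\Omega|^{-1}\log(e+|\Omega|t).
\end{align}
This is the stated logarithmic bound.

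The only delicate point is this borderline exponent at $p=\infty$, together with keeping the constant uniform in $\Omega$; both are handled by the rescaling above. The proof is elementary, and no step is expected to be a real obstacle.
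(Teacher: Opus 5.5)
Your proof is correct. The substitution $\tau=|\Omega|s$ removes $\Omega$. For $p<\infty$, the split into $T\leq 1$ and $T\geq 1$, together with the monotonicity of $(\log(e+\tau))^{1/4}$ and the explicit integral of $(1+\tau)^{-\alpha}$ with $\alpha<1$, gives the bound with a constant of order $1/(1-\alpha)$. At $p=\infty$, $\alpha=1$ produces the logarithm.

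The paper gives no proof of its own for this lemma; it simply imports it from \cite{FLX26}*{Lemma 4.3}. Your computation is the standard elementary verification one would expect there.
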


\section{Linear analysis}
In this section, we focus on the linearized equations: 
\begin{align}\label{eq:lin}
    \begin{cases}
        \partial_t u^{\rm lin} - \Delta u^{\rm lin} + \Omega e_3 \times u^{\rm lin} + \nabla P =0, & t>0,x \in \R^3, \\
        \partial_t B^{\rm lin}-\Delta B^{\rm lin}=0,  & t>0,x \in \R^3, \\
        \div u^{\rm lin} =\div B^{\rm lin}= 0, & t\geq 0, x \in \R^3, \\
        u^{\rm lin}(0,x) = u_0(x),\quad B^{\rm lin}(0,x)=B_0(x) & x \in \R^3.
    \end{cases}
\end{align}
Following \cites{ET23,HS10}, we know that the solution to \eqref{eq:lin} takes the form
\begin{align}\label{lin-sol-for}
    \begin{cases}
       u^{\rm lin}(t)= e^{t (\Delta-\Om \mathbb{P} e_3 \times \mathbb{P})  } u_0
=
e^{t \Delta} e^{it \Om \frac{D_3}{|D|}} P_{+}(D) u_0
+
e^{t \Delta} e^{-it \Om \frac{D_3}{|D|}} P_{-}(D) u_0, \\
       B^{\rm lin}(t)= e^{t \Delta}B_0,
    \end{cases}
\end{align}
where the Fourier multiplier $P_\pm(D)=\mathscr{F}^{-1} P_\pm(\xi)\mathscr{F}$ is defined by 
\begin{align}\label{def-of-PR}
    P_\pm(\xi) = \frac{1}{2}\sp{I \pm iR(\xi)},
    \qquad
    R(\xi)
    :=
    \frac{1}{|\xi|}
    \begin{pmatrix}
        0 &  -  \xi_3 & \xi_2 \\
        \xi_3 & 0 & -  \xi_1 \\
        - \xi_2 & \xi_1 & 0
    \end{pmatrix}. 
\end{align}
For brevity, we shall denote by $T_{\Om}(t):=u^{\rm lin}(t)$ in the sequel and introduce the notation 
\begin{align}\label{def-D}
    \mathcal{D}_p(\tau)
    :=
    \begin{cases}
        {(1+|\tau|)^{-\frac{3}{2}(1-\frac{2}{p})}}  & {\rm for}\quad 2 \leq p < 4,
        \\
        {(1+|\tau|)^{-\frac{3}{4}}(\log(e+|\tau|))^{\frac{1}{4}}}  & {\rm for}\quad p=4, 
        \\
        {(1+|\tau|)^{-(1-\frac{1}{p})}}  & {\rm for}\quad 4<p \leq \infty.
    \end{cases}
\end{align}
Here, $2 \leq p \leq \infty$ and $\tau \in \R$.

Now, we recall some previous decay estimates related to the linear solution \eqref{lin-sol-for}.  
\begin{lemm}\label{thm:lin-decay}
    For $2 \leq p \leq \infty$, $\ell \in \N \cup \{0\}$,
    the following statements hold true.
    \begin{itemize}
    \item [(1)]
    For any $u_0 \in L^1(\R^3)$ with $\div u_0=0$,
    it holds
    \begin{align}
        \n{\nabla^{\ell} T_\Omega(t)u_0}_{L^p}
        =
        o\sp{t^{-\frac{3}{2}(1-\frac{1}{p})-\frac{\ell}{2}}
            \mathcal{D}_p(|\Omega| t)}
    \end{align}
    as $t \to \infty$.
    \item [(2)]
    If $|x|u_0(x) \in L^1(\R^3_x)$ with $\div u_0=0$, then there exists a positive constant $C=C(p,\ell)$ such that
    \begin{align}
        \n{\nabla^{\ell} T_\Omega(t)u_0}_{L^p}
        \leq 
        Ct^{-\frac{3}{2}(1-\frac{1}{p})-\frac{\ell+1}{2}}
        \mathcal{D}_p(|\Omega| t)\n{|x|u_0(x)}_{L^1(\R^3_x)}
    \end{align}
    for all $t >0$.
    \end{itemize}
\end{lemm}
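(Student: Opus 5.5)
The plan is to work on the Fourier side with the decomposition \eqref{lin-sol-for}. Write
\[
T_\Omega(t)u_0=\sum_{\pm} e^{t\Delta}e^{\pm it\Omega\frac{D_3}{|D|}}P_\pm(D)u_0 .
\]
The central tool is a dispersive estimate for the frequency-localized oscillatory kernel. For $\widehat{\varphi_j}$ as above, we expect
\[
\Bigl\| \mathscr{F}^{-1}\bigl[e^{-t|\xi|^2}e^{\pm it\Omega\frac{\xi_3}{|\xi|}}\widehat{\varphi_j}(\xi)\bigr]\Bigr\|_{L^\infty}\le C2^{3j}e^{-ct2^{2j}}\,(1+|\Omega|t)^{-1}.
\]
This comes from stationary phase. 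The phase $\xi_3/|\xi|$ has a Hessian of rank two on the sphere, but it degenerates at the poles $\xi_3=\pm|\xi|$ and on the equator $\xi_3=0$. A careful treatment of these degenerate regions yields the $(1+|\Omega|t)^{-1}$ rate; this is the refined estimate of \cite{FLX26}. Interpolating with the $L^2$ bound (which has no decay gain) gives $L^1\to L^p$ bounds for each block. These carry a factor $2^{3j(1-1/p)}e^{-ct2^{2j}}$ times a gain in $|\Omega|t$ of size $(1-2/p)$.

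To reach the sharper rate $\mathcal{D}_p$, the divergence-free structure is used. The eigenprojections $P_\pm(\xi)$ acting on a divergence-free field vanish to first order in $\xi_3/|\xi|$ near the equator, up to components we can control. More precisely, the bad equatorial region (where the dispersion is weakest) is treated by an angular decomposition of frequency space: split into annular sectors $|\xi_3|/|\xi|\sim 2^{-k}$, each of measure $\sim 2^{-k}$. On each sector, combine a stationary-phase bound of size $(|\Omega|t2^{-k})^{-1/2}\cdot(\ldots)$ with the volume gain $2^{-k}$. Summing over $k$, the $L^2$ part and the $L^\infty$ part balance differently. The result is exactly three regimes: rate $\frac32(1-\frac2p)$ for $p<4$, a logarithmic loss at $p=4$ (which is where the geometric sum becomes critical), and rate $1-\frac1p$ for $p>4$. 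This gives
\[
\|\nabla^\ell T_\Omega(t)u_0\|_{L^p}\le Ct^{-\frac32(1-\frac1p)-\frac\ell2}\mathcal{D}_p(|\Omega|t)\|u_0\|_{L^1},
\]
after summing the blocks in $j$ against $e^{-ct2^{2j}}$. I expect this sectorial bookkeeping to be the main obstacle, in particular producing the exact exponents and the log at $p=4$. What I would try first is to reproduce the kernel estimates of \cite{FLX26} for fixed $j$ after scaling to $j=0$ with parameter $|\Omega|t$.

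For part (1), upgrade $O$ to $o$ by density. Approximate $u_0$ in $L^1$ by divergence-free $v_0\in\mathscr{S}$ with $|x|v_0\in L^1$. This can be done via $\mathbb{P}$ applied to mollified cutoffs, keeping the $L^1$ error small; the projection is done on the Fourier side, or one uses the identity $u_0=\nabla\times(\ldots)$ to keep $L^1$ control. Then split
\[
\|\nabla^\ell T_\Omega(t)u_0\|_{L^p}\le C t^{-\frac32(1-\frac1p)-\frac\ell2}\mathcal{D}_p\,\|u_0-v_0\|_{L^1}+\|\nabla^\ell T_\Omega(t)v_0\|_{L^p}.
\]
The second term is $O(t^{-1/2})$ times the target rate by part (2). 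Dividing by the target rate, the $\limsup$ is bounded by $C\varepsilon$.

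For part (2), use the moment condition. Since $\operatorname{div}u_0=0$ and $u_0\in L^1$, we have $\widehat{u_0}(0)=0$, so $\widehat{u_0}(\xi)=\int(e^{-ix\cdot\xi}-1)u_0\,dx$. Then $|\widehat{u_0}(\xi)|\le|\xi|\,\||x|u_0\|_{L^1}$, and more precisely $u_0=\sum_k\partial_k w_k$ with $\|w_k\|_{L^1}\le C\||x|u_0\|_{L^1}$. The usual construction $w_k=-x_ku_0$ works, as $\int\!\!\int$ of the divergence-free field components vanishes. Applying the bound from the first paragraph with $\ell+1$ derivatives to the $w_k$ gives the extra $t^{-1/2}$.
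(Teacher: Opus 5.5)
\paragraph{Main gap: the block estimate with rate $\mathcal{D}_p$.}
Your outer frame matches what the paper takes from \cite{FLX26}*{Theorem 3.2} and Remark~\ref{Rem}: Littlewood--Paley blocks, an $L^1\to L^p$ dispersive bound per block, summation in $j$ against $e^{-ct2^{2j}}$, cancellation for (2), and density for (1). The gap is the one nontrivial ingredient, the block bound with rate $\mathcal{D}_p$. The mechanism you propose for it is false. The matrices $P_\pm(\xi)$ do not vanish near $\xi_3=0$ on divergence-free vectors. On $\xi^\perp$, $iR(\xi)$ is Hermitian with eigenvalues $\pm1$, and $P_\pm(\xi)$ are its complementary rank-one spectral projections; for example, $P_+(e_1)(0,1,0)^{T}=\frac12(0,1,i)^{T}$.

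Remark~\ref{Rem} states the bound for the scalar operator $e^{i\tau D_3/|D|}$ and \emph{arbitrary} $f$. So divergence-freeness plays no role in the rate. Your own part (2) already applies the bound to the $w_k$, which are not divergence-free.

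What is missing is this. An $L^1\to L^p$ bound valid for all $f$ is a bound on $\n{K_\tau}_{L^p}$, where $K_\tau=\mathscr{F}^{-1}[e^{i\tau\xi_3/|\xi|}\widehat{\varphi_0}]$. H\"older between $\n{K_\tau}_{L^2}\sim1$ and $\n{K_\tau}_{L^\infty}\lesssim\tau^{-1}$ only gives $\tau^{-(1-2/p)}$, because it ignores how spread out the kernel is.

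On the unit sphere $\det\nabla^2(\xi_3/|\xi|)=\cos\theta\sin^2\theta$, where $\theta$ is the polar angle. Consequently $|K_\tau|\sim\tau^{-3/2}$ on a set of volume $\sim\tau^3$, and it reaches size $\tau^{-1}$ only on sets of volume $\sim\tau$. These sets come from \emph{both} degenerate regions:
\begin{itemize}
\item the poles give $\{|x_3|\lesssim1,\ |x_h|\lesssim\tau^{1/2}\}$;
\item the equator gives $\{|x_h|\lesssim1,\ |x_3|\sim\tau\}$.
\end{itemize}
Comparing $\tau^{3-\frac{3p}{2}}$ with $\tau^{1-p}$ produces the threshold $p=4$.

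An angular decomposition with H\"older on each piece can reproduce this, but two conditions must be met. First, it must be done at the poles as well as the equator; for $p\geq4$ they contribute at the same order. Second, at $p=4$ the pieces must be summed in $\ell^4$, using their essentially disjoint spatial supports ($|x_h|\sim\tau2^{-k}$ for the $k$-th equatorial sector). The triangle inequality gives only $\tau^{-3/4}\log\tau$, not $\mathcal{D}_4(\tau)\sim\tau^{-3/4}(\log\tau)^{1/4}$.

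\paragraph{Two smaller errors.}
First, $u_0=\sum_k\partial_k(-x_ku_0)$ is false: the right-hand side is $-3u_0-(x\cdot\nabla)u_0$. The correct identity uses $\div u_0=0$: $u_{0,i}=\div(x_iu_0)$. That is, take $w_k=x\,u_{0,k}$, with $\n{w_k}_{L^1}\leq\n{|x|u_0}_{L^1}$.

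Second, in (1), $\mathbb{P}$ is not bounded on $L^1$, so producing divergence-free approximants that way is unjustified. It is also unnecessary. For the multiplier $\sum_\pm e^{t\Delta}e^{\pm it\Om D_3/|D|}P_\pm(D)$, the proof of (2) only uses $\int v_0\,dx=0$. A cut-off of $u_0$ minus a suitable multiple of a fixed bump achieves this. Alternatively, split $\int(K(x-y)-K(x))u_0(y)\,dy$ into $|y|\leq R$ and $|y|>R$ directly.
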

\begin{rem}\label{Rem}
Indeed, from the proof of Lemma \ref{thm:lin-decay} we see that there exists a positive constant $C$ such that 
    \begin{align}
        \n{\Delta_j 
        { e^{i\tau \frac{D_3}{|D|}}f }  }_{L^p}
        &
        \leq 
        C
        2^{3(1-\frac{1}{p})j}
        \mathcal{D}_p(\tau)
        \n{\Delta_j f}_{L^1},
        \\
        \n{\Delta_j 
        { e^{i\tau \frac{D_3}{|D|}}f}  }_{L^{4,\infty}}
        &
        \leq 
        C
        2^{3(1-\frac{1}{4})j}
        (1+|\tau|)^{-\frac{3}{4}}
        \n{\Delta_j f}_{L^1}
    \end{align}
    for all $\tau \in \R$, $j \in \Z$ and $f \in \mathscr{S}'(\R^3)$ with $\Delta_j f \in L^1(\R^3)$.  
\end{rem}
The proof of Lemma \ref{thm:lin-decay} can be found in \cite{FLX26}*{Theorem 3.2}. The details are omitted for brevity.

\begin{lemm}\label{thm:heat-decay}
 For $1 \leq p \leq \infty$, $\ell \in \N \cup \{0\}$,
    the following statements hold true.
    \begin{itemize}
    \item [(1)]
    For any $B_0 \in L^1(\R^3)$ with $\div B_0=0$,
    it holds
    \begin{align}
        \n{\nabla^{\ell} e^{t \Delta}B_0}_{L^p}
        =
        o\sp{t^{-\frac{3}{2}(1-\frac{1}{p})-\frac{\ell}{2}}
          }
    \end{align}
    as $t \to \infty$.
    \item [(2)]
    If $|x|B_0(x) \in L^1(\R^3_x)$ with $\div B_0=0$, then there exists a positive constant $C=C(p,\ell)$ such that
    \begin{align}
        \n{\nabla^{\ell}e^{t \Delta}B_0}_{L^p}
        \leq 
        Ct^{-\frac{3}{2}(1-\frac{1}{p})-\frac{\ell+1}{2}}
        \n{|x| B_0(x)}_{L^1(\R^3_x)}
    \end{align}
    for all $t >0$.
    \end{itemize}

\end{lemm}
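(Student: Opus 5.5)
Both parts reduce to the explicit heat kernel $G_t(x)=(4\pi t)^{-3/2}e^{-|x|^2/(4t)}$. We have $e^{t\Delta}B_0=G_t*B_0$ and $\nabla^\ell e^{t\Delta}B_0=(\nabla^\ell G_t)*B_0$. Scaling gives $\n{\nabla^\ell G_t}_{L^p}=C t^{-\frac32(1-\frac1p)-\frac{\ell}{2}}$ for every $1\le p\le\infty$. By Young's inequality this yields the $O$-bound $\n{\nabla^\ell e^{t\Delta}B_0}_{L^p}\le Ct^{-\frac32(1-\frac1p)-\frac\ell2}\n{B_0}_{L^1}$. The two statements then upgrade this bound using, respectively, the vanishing of the mean of $B_0$ and a first-moment Taylor expansion.

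\textbf{Key step (mean zero).} Since $\div B_0=0$ and $B_0\in L^1$, we get $\widehat{B_0}(\xi)\cdot\xi=0$, and $\widehat{B_0}$ is continuous. Differentiating at $\xi=0$, or testing $\div B_0=0$ against $x_k\chi(x/R)$ and letting $R\to\infty$, gives $\int_{\R^3}B_0\,dx=0$ componentwise. Indeed,
\begin{align}
0=\int B_0\cdot\nabla(x_k\chi(x/R))\,dx=\int B_{0,k}\chi(x/R)\,dx+\int x_k B_0\cdot R^{-1}(\nabla\chi)(x/R)\,dx .
\end{align}
The last term is bounded by $C\int_{R\le|x|\le2R}|B_0|\,dx\to0$, so dominated convergence gives $\int B_{0,k}\,dx=0$.

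\textbf{Part (2).} Using $\int B_0\,dy=0$, write
\begin{align}
\nabla^\ell e^{t\Delta}B_0(x)=\int\big(\nabla^\ell G_t(x-y)-\nabla^\ell G_t(x)\big)B_0(y)\,dy=-\int\!\!\int_0^1 y\cdot\nabla\nabla^\ell G_t(x-\theta y)\,d\theta\, B_0(y)\,dy .
\end{align}
Taking the $L^p_x$ norm by Minkowski, and using translation invariance of $\n{\nabla^{\ell+1}G_t}_{L^p}=Ct^{-\frac32(1-\frac1p)-\frac{\ell+1}{2}}$, gives exactly the claimed bound with $\n{|x|B_0}_{L^1}$.

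\textbf{Part (1).} This is a density argument, which is the only mildly delicate point. Given $\varepsilon>0$, choose $\phi\in C_c^\infty(\R^3)^3$ with $\n{B_0-\phi}_{L^1}<\varepsilon$ and, crucially, $\int\phi\,dx=0$. Take any $C_c^\infty$ approximant and subtract $(\int\phi)\rho$, where $\rho$ is a fixed bump with $\int\rho=1$. Since $|\int\phi|=|\int(\phi-B_0)|<\varepsilon$, this changes the $L^1$ error by at most $C\varepsilon$. The divergence-free condition is not needed for $\phi$, since the Part (2) argument only used the zero mean. Then $|x|\phi\in L^1$, so the Part (2) computation gives
\begin{align}
t^{\frac32(1-\frac1p)+\frac\ell2}\n{\nabla^\ell e^{t\Delta}B_0}_{L^p}\le C\n{B_0-\phi}_{L^1}+Ct^{-\frac12}\n{|x|\phi}_{L^1}\le C\varepsilon+C_\phi t^{-1/2}.
\end{align}
Taking $\limsup_{t\to\infty}$ and then $\varepsilon\to0$ yields the $o$-estimate. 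The main thing to get right is preserving the zero-mean property under approximation, which the correction by $\rho$ handles.
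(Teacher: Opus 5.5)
Your proof is correct and complete. The zero mean of $B_0$ is derived rigorously, the first-order Taylor expansion of $\nabla^\ell G_t$ with Minkowski's inequality gives (2) for all $t>0$ and all $1\le p\le\infty$, and the density step yields the $o$-estimate in (1). In that step, correcting the approximant by $(\int\phi)\rho$ so that it stays mean-zero is exactly the right move.

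The paper gives no proof of this lemma and simply defers to \cite{L24}*{Lemmas 2.1, 2.5}; your argument is the standard one for statements of this type.

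As a side remark, your proof of (2) uses only $\int B_0\,dx=0$. If you use $\div B_0=0$ itself through the identity $B_{0,j}=\sum_{k=1}^3\partial_k(x_jB_{0,k})$, then (2) follows in one line from Young's inequality, $\n{\nabla^\ell e^{t\Delta}B_0}_{L^p}\le C\n{\nabla^{\ell+1}G_t}_{L^p}\n{|x|B_0}_{L^1}$, with no expansion of the kernel. Your zero-mean formulation is nevertheless the better fit for (1). Approximating $B_0$ in $L^1$ by mean-zero $C_c^\infty$ fields is trivial, whereas approximating it by divergence-free fields with finite first moment would need extra work, since a cutoff destroys $\div=0$.
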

The proof of Lemma \ref{thm:heat-decay} may be found in \cite{L24}*{Lemmas 2.1, 2.5}. The details of proof are omitted.

\section{Nonlinear analysis}
\subsection{Decay estimates of the velocity field}
By Duhamel's principle, we rewrite the solution to \eqref{eq:non} as
\begin{align}\label{Duham-non}
    \begin{cases}
      u(t)= T_{\Omega} u_0-  \mathcal{N}_{\Om}^{\rm vel}[u](t) + \mathcal{N}_{\Om}^{\rm vel}[B](t)  , \\
      B(t)=e^{t \Delta}B_0  -  \mathcal{N}_{\Om}^{\rm mag}[u,B](t)     ,
    \end{cases}
\end{align}
where 
\begin{align}
& \mathcal{N}_{\Om}^{\rm vel}[u](t):= 
\int_0^t T_{\Omega} (t-\tau) \mathbb{P} \div (u \otimes u)(\tau) \dtau, \\
& \mathcal{N}_{\Om}^{\rm vel}[B](t):= 
\int_0^t T_{\Omega} (t-\tau) \mathbb{P} \div (B \otimes B)(\tau) \dtau, \\
& \mathcal{N}^{\rm mag}[u,B](t) := \int_0^t e^{(t-\tau) \Delta} \nabla \times (B \times u) (\tau) \dtau,
\end{align}
and $T_{\Omega}(\cdot)$ was defined in \eqref{lin-sol-for}. Notice that the decay estimate for $\mathcal{N}_{\Om}^{\rm vel}[u]$ has been obtained in our previous work \cite{FLX26}*{Proposition 4.2}. This is summarized as follows.
\begin{lemm}\label{prop:nonl-dec}
    Let $u_0,B_0 \in \dot H^{\frac{1}{2}}(\R^3) \cap L^1(\R^3)$ with $\div u_0=\div B_0 = 0$,
    and let $(u,B)$ be the associated global solution to \eqref{eq:non} with $|\Omega| \geq \Omega_0$, constructed in Lemma \ref{lemm:GWP}.
    Then, for every $2 \leq p < \infty$ and $ \ell \in \N \cup \{0\}$, there exists a positive constant $K_{p,\ell}=K_{p,\ell}(\n{u_0}_{L^1}, \n{B_0}_{L^1}
 \n{u_0}_{\dH^{\frac{1}{2}}},  \n{B_0}_{\dH^{\frac{1}{2}}})$, independent of $\Om$, such that 
    \begin{align}
        \n{\nabla^{\ell}\mathcal{N}_{\Om}^{\rm vel}[u](t)}_{L^p}
        &
        \leq 
        K_{p,\ell}
        t^{-\frac{3}{2}(1-\frac{1}{p})-\frac{\ell+1}{2}}
        \mathcal{D}_p(|\Om| t), \label{Prop:dec-p}
        \\
        \n{\nabla^{\ell}\mathcal{N}_{\Om}^{\rm vel}[u](t)}_{L^{4,\infty}}
        &
        \leq 
        K_{4,\ell}
        t^{-\frac{3}{2}(1-\frac{1}{4})-\frac{\ell+1}{2}}
        (1+|\Om|t)^{-\frac{3}{4}} \label{Prop:dec-p-44}
    \end{align}
    for all $t>0$.
    Moreover, for the case of $p=\infty$, there exist a positive constant $K_{\infty,\ell}=K_{\infty,\ell}(\n{u_0}_{L^1}, \n{B_0}_{L^1}
 \n{u_0}_{\dH^{\frac{1}{2}}},  \n{B_0}_{\dH^{\frac{1}{2}}})$ and an absolute positive constant $C$ such that
    \begin{align}
        \n{\nabla^{\ell} \mathcal{N}_{\Om}^{\rm vel}[u](t)}_{L^\infty}
        \leq {}&
        K_{\infty,\ell}
        t^{-\frac{3}{2}-\frac{\ell+1}{2}}
        \mathcal{D}_\infty(|\Om| t) 
        \\
        &
        +
        Ct^{-\frac{5}{2}-\frac{\ell}{2}}
        \mathcal{D}_4(|\Om| t)^2
        \n{u}_{X_{\Om,4}^{0}(t)}
        \n{u}_{X_{\Om,4}^{\ell+1}(t)}^{\frac{1}{2}}
        \n{u}_{X_{\Om,4}^{\ell+2}(t)}^{\frac{1}{2}}
    \end{align}
    for all $t>0$, where 
    \begin{align}
        \n{u}_{X_{\Om,p}^{\ell}(t)}
        :=
        \sup_{\frac{t}{2} \leq \tau \leq t}
        \tau^{\frac{3}{2}(1-\frac{1}{p})+\frac{\ell}{2}}
        \mathcal{D}_p(|\Om|\tau)^{-1}\n{\nabla^{\ell} u(\tau)}_{L^p}.
    \end{align}
\end{lemm}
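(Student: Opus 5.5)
We would prove Lemma \ref{prop:nonl-dec} exactly as in \cite{FLX26}*{Proposition 4.2}, since for the rotating MHD system the only new input is that the \emph{a priori} bounds on $u$ now come from Lemma \ref{lemm:GWP} and Lemma \ref{lemm:L2-decay} (which give the same $L^2$-based decay $\n{\nabla^k u(\tau)}_{L^2}\leq C_k(1+\tau)^{-3/4}\tau^{-k/2}$ as in the rotating Navier--Stokes case), and $\mathcal{N}_{\Om}^{\rm vel}[u]$ involves only $u\otimes u$. The plan is to split the Duhamel integral as $\int_0^{t/2}+\int_{t/2}^t$ and treat the two pieces with different tools.

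For the early-time piece $\int_0^{t/2}$ we will put all derivatives and the dispersion on the semigroup. We localize dyadically and use Remark \ref{Rem} together with the heat factor $e^{-c(t-\tau)2^{2j}}$:
\begin{align}
\n{\Delta_j\nabla^\ell T_\Om(t-\tau)\mathbb{P}\div(u\otimes u)}_{L^p}\leq C2^{(3(1-\frac1p)+\ell+1)j}e^{-c(t-\tau)2^{2j}}\mathcal{D}_p(|\Om|(t-\tau))\n{u(\tau)}_{L^2}^2 .
\end{align}
Summing in $j$ yields $(t-\tau)^{-\frac32(1-\frac1p)-\frac{\ell+1}{2}}\mathcal{D}_p(|\Om|(t-\tau))$, and for $\tau\leq t/2$ we have $t-\tau\sim t$, so $\mathcal{D}_p(|\Om|(t-\tau))\leq C\mathcal{D}_p(|\Om|t)$. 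It remains to show $\int_0^{t/2}\n{u(\tau)}_{L^2}^2\,d\tau$ is bounded. We cannot use the crude decay $(1+\tau)^{-3/2}$ from Lemma \ref{lemm:L2-decay} alone, since it is not integrable near infinity; so we interpolate with the energy-type integrability $\nabla u\in L^2_tH^{1/2}$. Concretely we bound $\n{u\otimes u}_{\dB^{-\varepsilon}_{1,\infty}}$-type quantities, or move one derivative via $\n{u\otimes u}_{L^1}$ combined with a slightly faster $L^{1}$-Besov bound. The $L^{4,\infty}$ estimate \eqref{Prop:dec-p-44} follows identically from the second line of Remark \ref{Rem}.

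For the late piece $\int_{t/2}^t$ we will use $L^p$-type bounds on $u$ near time $t$. We estimate $\nabla^\ell\div(u\otimes u)$ in $L^{p}$ (or $L^{p/2}$ followed by dispersion) via Lemma \ref{lemm:prod-est} and the Besov interpolation Lemma \ref{lemm:interp}. The $u$-factors are controlled by the quantities $\n{u}_{X^{k}_{\Om,q}(t)}$, which we first show to be finite by a bootstrap: the linear part is handled by Lemma \ref{thm:lin-decay}, and the $B$-contributions use the heat-type decay of Lemma \ref{thm:heat-decay} together with $L^2$ decay and Gagliardo--Nirenberg. The time integral then involves $\int_{t/2}^t\mathcal{D}_p(|\Om|(t-\tau))\,d\tau$, handled by Lemma \ref{lemm:elem}. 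For $p<\infty$ this returns $t\,\mathcal{D}_p(|\Om|t)$, which gives \eqref{Prop:dec-p} with constants independent of $\Om$.

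The main obstacle is $p=\infty$. There Lemma \ref{lemm:elem} produces only $|\Om|^{-1}\log(e+|\Om|t)$ instead of $t\,\mathcal{D}_\infty$, so the late piece cannot close purely by dispersion. We would instead estimate the $L^\infty$ norm of $\nabla^{\ell+1}(u\otimes u)$ by $\n{\nabla^{\ell+1}(u\otimes u)}_{L^\infty}\lesssim \n{u}_{L^4}\cdots$ via Gagliardo--Nirenberg in $L^4$. This avoids dispersion on the semigroup for $\tau$ near $t$ and yields the product $\mathcal{D}_4(|\Om|t)^2\n{u}_{X^0_{\Om,4}}\n{u}_{X^{\ell+1}_{\Om,4}}^{1/2}\n{u}_{X^{\ell+2}_{\Om,4}}^{1/2}$ appearing in the statement. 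Balancing the exponents, so that $t^{-5/2-\ell/2}$ emerges after integrating over an interval of length $t/2$, is the delicate bookkeeping step.
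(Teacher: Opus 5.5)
Your reduction matches the paper's: the lemma is simply quoted from \cite{FLX26}*{Proposition 4.2}, because $\mathcal{N}^{\rm vel}_\Om[u]$ involves only $u\otimes u$ and the only input needed is the $L^2$ decay of Lemma~\ref{lemm:L2-decay}. Your sketch of that argument, however, has one false step and two steps that would not work.

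First, $\int_0^\infty(1+\tau)^{-3/2}\dtau=2$, so the bound from Lemma~\ref{lemm:L2-decay} \emph{is} integrable. Your early piece therefore closes at once, exactly as in the paper's computation for $\mathcal{N}^{{\rm vel},1}_\Om[B]$, and the $\dB^{-\varepsilon}_{1,\infty}$ detour is unnecessary.

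Second, for $2\le p<\infty$ the late piece needs no $L^p$ information on $u$, and your bootstrap for $\n{u}_{X^k_{\Om,q}(t)}$ is circular. These norms control $u$, which contains $\mathcal{N}^{\rm vel}_\Om[u]$ itself; the data are large and you give no closing mechanism. Moreover $B$ enters $u$ through $\mathcal{N}^{\rm vel}_\Om[B]$, i.e.\ through $T_\Om$, so heat decay of $B$ cannot produce a factor $\mathcal{D}_q$. Measuring the forcing in $L^p$ or $L^{p/2}$ also does not fit the linear theory: the rotating group is uniformly bounded only on $L^2$, and Remark~\ref{Rem} is an $L^1\to L^p$ estimate. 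The missing idea is to put all derivatives on the forcing in an $L^1$-based Besov space and use only $L^2$ decay:
\begin{align*}
\n{\nabla^\ell T_\Om(t-\tau)\mathbb{P}\div(u\otimes u)(\tau)}_{L^p}
&\le C\mathcal{D}_p(|\Om|(t-\tau))\n{u(\tau)}_{L^2}\n{u(\tau)}_{\dB^{\ell+1+3(1-\frac1p)}_{2,1}}\\
&\le C\mathcal{D}_p(|\Om|(t-\tau))(1+\tau)^{-\frac32}\tau^{-\frac{\ell+1}{2}-\frac32(1-\frac1p)} .
\end{align*}
This uses Remark~\ref{Rem} and Lemmas~\ref{lemm:prod-est}, \ref{lemm:interp}, \ref{lemm:L2-decay}, followed by Lemma~\ref{lemm:elem}. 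It is the same computation the paper carries out for $\mathcal{N}^{{\rm vel},2}_\Om[B]$.

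Third, for $p=\infty$, bounding $\nabla^{\ell+1}(u\otimes u)$ in $L^\infty$ cannot work, for two reasons.
\begin{itemize}
\item The group $e^{\pm is\Om D_3/|D|}$ is not uniformly bounded on $L^\infty$. On a dyadic block its kernel has $s$-independent $L^2$ norm and, by Remark~\ref{Rem}, sup norm $\lesssim(1+|\Om|s)^{-1}$. Hence its $L^1$ norm is $\gtrsim 1+|\Om|s$, which destroys the $\Om$-independence of the constant.
\item By scaling, $\n{\nabla^{\ell+1}(u\otimes u)}_{L^\infty}$ carries one more derivative than $\n{u}_{L^4}\n{\nabla^{\ell+1}u}_{L^4}^{1/2}\n{\nabla^{\ell+2}u}_{L^4}^{1/2}$, so no Gagliardo--Nirenberg inequality relates them. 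Indeed, this product expressed through the $X_{\Om,4}$-norms is $\lesssim\tau^{-3-\frac{\ell}{2}}\mathcal{D}_4(|\Om|\tau)^2$. Integrating it over an interval of length $t/2$ gives $t^{-2-\frac{\ell}{2}}$, not $t^{-\frac52-\frac{\ell}{2}}$.
\end{itemize}
The missing half-derivative must come from the heat factor, working in $L^2$ where the rotation is unitary. Using $\dB^{3/2}_{2,1}\hookrightarrow L^\infty$ and $\n{\Delta_j e^{s\Delta}f}_{L^2}\le e^{-cs2^{2j}}\n{\Delta_jf}_{L^2}$,
\begin{align*}
\n{\nabla^\ell T_\Om(t-\tau)\mathbb{P}\div(u\otimes u)}_{L^\infty}
\le C(t-\tau)^{-\frac12}\n{u\otimes u}_{\dB^{\ell+\frac32}_{2,\infty}}
\le C(t-\tau)^{-\frac12}\n{u}_{L^4}\n{\nabla^{\ell+1}u}_{L^4}^{\frac12}\n{\nabla^{\ell+2}u}_{L^4}^{\frac12},
\end{align*}
by Lemma~\ref{lemm:prod-est} with $p_1=p_2=4$ and Lemma~\ref{lemm:interp}. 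Since $\int_{t/2}^t(t-\tau)^{-1/2}\dtau\simeq t^{1/2}$, this yields exactly the second term of the statement with an $\Om$-independent constant. The $X_{\Om,4}$-norms simply stay on the right-hand side, so no bootstrap is needed inside the lemma; their finiteness is obtained afterwards from the case $p=4$.
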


Hence, to obtain the decay estimate of $\n{\nabla^{\ell}u(t)}_{L^p}$, we only need to focus on the nonlinear term $\mathcal{N}_{\Om}^{\rm vel}[B]$. 
\begin{lemm}\label{prop:nonl-dec-2}
Let the assumptions of Lemma \ref{prop:nonl-dec} be satisfied. Then, for any $2 \leq p < \infty$ and $ \ell \in \N \cup \{0\}$, there exists a positive constant $K_{p,\ell}=K_{p,\ell}(\n{u_0}_{L^1}, \n{B_0}_{L^1}
 \n{u_0}_{\dH^{\frac{1}{2}}},  \n{B_0}_{\dH^{\frac{1}{2}}})$, independent of $\Om$, such that     
 \begin{align}
        \n{\nabla^{\ell}\mathcal{N}_{\Om}^{\rm vel}[B](t)}_{L^p}
        &
        \leq 
        K_{p,\ell}
        t^{-\frac{3}{2}(1-\frac{1}{p})-\frac{\ell+1}{2}}
        \mathcal{D}_p(|\Om| t), \label{Prop:dec-p-B}
        \\
        \n{\nabla^{\ell}\mathcal{N}_{\Om}^{\rm vel}[B](t)}_{L^{4,\infty}}
        &
        \leq 
        K_{4,\ell}
        t^{-\frac{3}{2}(1-\frac{1}{4})-\frac{\ell+1}{2}}
        (1+|\Om|t)^{-\frac{3}{4}} \label{Prop:dec-p-44-B}
    \end{align}
    for all $t>0$. Moreover, for the case of $p=\infty$, there exist a positive constant $K_{\infty,\ell}=K_{\infty,\ell}(\n{u_0}_{L^1}, \n{B_0}_{L^1}
 \n{u_0}_{\dH^{\frac{1}{2}}},  \n{B_0}_{\dH^{\frac{1}{2}}})$ and a positive constant \\$C_{\Omega,\ell}=C_{\Omega,\ell} (\Omega, \n{u_0}_{L^1}, \n{B_0}_{L^1}
 \n{u_0}_{\dH^{\frac{1}{2}}},  \n{B_0}_{\dH^{\frac{1}{2}}}) $ such that
    \begin{align}\label{L-infty-B}
        \n{\nabla^{\ell}\mathcal{N}_{\Om}^{\rm vel}[B](t)}_{L^\infty}
        \leq {}&
        K_{\infty,\ell}
        t^{-\frac{3}{2}-\frac{\ell+1}{2}}
        \mathcal{D}_\infty(|\Om| t) 
        +C_{\Omega,\ell}  t^{-\frac{3}{2}-\frac{\ell+1}{2}}
        \mathcal{D}_\infty(|\Om| t) 
    \end{align}
    for all $t>0$.
\end{lemm}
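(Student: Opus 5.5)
The plan is to mimic the proof of \cite{FLX26}*{Proposition 4.2}, i.e.\ Lemma \ref{prop:nonl-dec}, with $u\otimes u$ replaced by $B\otimes B$. The structural difference is that $B$ carries no rotational decay. It only satisfies the heat-type bounds $\n{\nabla^\ell B(\tau)}_{L^q}\lesssim \tau^{-\frac32(1-\frac1q)-\frac\ell2}$. For $q=2$ these come from Lemma \ref{lemm:L2-decay}; for general $q$ they come from Gagliardo--Nirenberg interpolation between $L^2$ bounds of different derivative orders. This loss is harmless, because all the dispersion needed is supplied by the linear propagator $T_\Om(t-\tau)$ through Remark \ref{Rem}. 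We write
\begin{align}
\nabla^\ell\mathcal{N}_{\Om}^{\rm vel}[B](t)=\int_0^{t/2}+\int_{t/2}^t =: I_1+I_2 .
\end{align}

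\textbf{Term $I_1$.} For $\tau\in(0,t/2)$ we have $t-\tau\sim t$. We localize dyadically and apply Remark \ref{Rem} together with the heat factor $e^{(t-\tau)\Delta}$, putting the full derivative $\nabla^{\ell}\mathbb{P}\div$ on the kernel. This gives
\begin{align}
\n{\Delta_j T_\Om(t-\tau)\nabla^\ell\mathbb{P}\div (B\otimes B)}_{L^p}\lesssim 2^{(3(1-\frac1p)+\ell+1)j}e^{-c2^{2j}t}\mathcal{D}_p(|\Om|(t-\tau))\n{B\otimes B}_{L^1}.
\end{align}
Summing over $j$ yields the factor $t^{-\frac32(1-\frac1p)-\frac{\ell+1}2}\mathcal{D}_p(|\Om|t)$, using $\mathcal{D}_p(|\Om|(t-\tau))\sim\mathcal{D}_p(|\Om|t)$. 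Since $\int_0^\infty\n{B}_{L^2}^2\,d\tau$ diverges logarithmically, I would not use $\n{B\otimes B}_{L^1}$ on all of $(0,t/2)$. Instead, for $\tau\geq1$ I take $L^1$ norms of $B\otimes B$ in the slightly negative-order space $\dB^{-\varepsilon}_{1,\infty}$. One can trade $2^{-\varepsilon j}$ against $e^{-c2^{2j}t}$ to gain $t^{-\varepsilon/2}$, while $\n{B\otimes B(\tau)}_{\dB^{-\varepsilon}_{1,\infty}}\lesssim\n{B\otimes B}_{L^{r}}$ with $3(1/r-1)=-\varepsilon$ decays like $\tau^{-\frac32-\frac\varepsilon2}\cdot\tau^{\dots}$. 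This makes the integral converge and gives exactly the claimed rate. Near $\tau=0$ I use the energy bound $\int_0^1\n{B}_{L^2}^2\lesssim1$. This step is the only delicate point in $I_1$, and the computation is the same as in \cite{FLX26}. All constants depend only on the norms of the data, so they are independent of $\Om$.

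\textbf{Term $I_2$.} Here $\tau\sim t$. I put one derivative on the kernel and the rest on the product. For $p<\infty$, Remark \ref{Rem} gives the pointwise-in-$j$ factor $\mathcal{D}_p(|\Om|(t-\tau))$, and I estimate
\begin{align}
\n{\nabla^\ell (B\otimes B)(\tau)}_{\dB^{0}_{1,\infty}\text{-type}}\lesssim \tau^{-\frac32-\frac\ell2},
\end{align}
using Lemma \ref{lemm:prod-est} and the $L^2$-based bounds on $B$. The $j$-sum against $2^{(3(1-\frac1p)+1)j}e^{-c2^{2j}(t-\tau)}$ produces $(t-\tau)^{-\frac32(1-\frac1p)-\frac12}$, which is not integrable for large $p$. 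To fix this I split frequencies at $2^j\sim(t-\tau)^{-1/2}$ and use Lemma \ref{lemm:interp}: low frequencies get the $L^1$ dispersive bound, while high frequencies are estimated without dispersion from $L^p$ norms of $\nabla^{\ell+1}(B\otimes B)$, which decay like $\tau^{-3(1-\frac1{2p})\cdots}$. The high-frequency part is faster by an extra power of $t$, and that power beats $\mathcal{D}_p(|\Om|t)^{-1}\lesssim(|\Om|t)^{1-\frac1p}$ only at the cost of $\Om$-dependence. For this reason I instead keep the dispersive factor everywhere and integrate $\mathcal{D}_p(|\Om|(t-\tau))$ via Lemma \ref{lemm:elem}, which gives $\int_{t/2}^t\mathcal{D}_p(|\Om|(t-\tau))\,d\tau\lesssim t\mathcal{D}_p(|\Om|t)$ for $p<\infty$. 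For $p<\infty$ this closes with $\Om$-independent constants. The $L^{4,\infty}$ estimate \eqref{Prop:dec-p-44-B} is obtained identically, using the second line of Remark \ref{Rem}.

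\textbf{The case $p=\infty$ (main obstacle).} Lemma \ref{lemm:elem} yields only $|\Om|^{-1}\log(e+|\Om|t)$, which exceeds $t\mathcal{D}_\infty(|\Om|t)$ by a logarithm. In Lemma \ref{prop:nonl-dec} this log was absorbed through the $\mathcal{D}_4^2$ term, which relies on the rotational decay of $u$; $B$ has no such decay. I would therefore bound the near-diagonal part $\tau\in(t-|\Om|^{-1},t)$ without dispersion, by $\int\n{\nabla^{\ell+1}(B\otimes B)}_{L^\infty}\,d\tau\lesssim |\Om|^{-1}t^{-3-\frac{\ell+1}2}$. The remaining part of $I_2$ I would handle by interpolating the $L^\infty$ dispersive bound with the $L^4$ one, so that the logarithm is replaced by a convergent integral at the price of extra powers of $t$ from $B$'s decay. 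Converting these powers of $t$, together with the factors $|\Om|^{-1}$ and $\log(e+|\Om|t)$, into a multiple of $t^{-\frac32-\frac{\ell+1}2}\mathcal{D}_\infty(|\Om|t)$ produces a constant depending on $\Om$, namely $C_{\Om,\ell}$ in \eqref{L-infty-B}. I expect this $p=\infty$ bookkeeping to be the main difficulty.
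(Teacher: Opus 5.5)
\textbf{Term $I_1$.} Your treatment of $I_1$ rests on a false premise, and the workaround you substitute does not work. By Lemma \ref{lemm:L2-decay}, $\n{B(\tau)}_{L^2}\leq C(1+\tau)^{-\frac34}$, so $\n{B(\tau)}_{L^2}^2\leq C(1+\tau)^{-\frac32}$ is integrable on $(0,\infty)$ in three dimensions. There is no logarithmic divergence; that would be the two-dimensional situation. The direct bound $\n{B\otimes B}_{L^1}\leq \n{B}_{L^2}^2$ on all of $(0,t/2)$, combined with $t-\tau\geq t/2$ and the $L^1\to L^p$ dispersive estimate, already gives $K_{p,\ell}t^{-\frac32(1-\frac1p)-\frac{\ell+1}2}\mathcal{D}_p(|\Om|t)$ for every $2\leq p\leq\infty$. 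This is exactly what the paper does.

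Your replacement would in fact fail. In general $B\otimes B\notin\dB^{-\varepsilon}_{1,\infty}$, because $\widehat{B_iB_i}(0)=\n{B_i}_{L^2}^2>0$. This gives $\n{\Delta_j(B_iB_i)}_{L^1}\geq\sup_{|\xi|=2^j}|\widehat{B_iB_i}(\xi)|\to\n{B_i}_{L^2}^2$ as $j\to-\infty$, so $2^{-\varepsilon j}\n{\Delta_j(B_iB_i)}_{L^1}\to\infty$. Moreover, the embedding $L^r\hookrightarrow\dB^{-\varepsilon}_{1,\infty}$ with $r>1$ goes the wrong way, since Bernstein's inequality only lets you raise the integrability index.

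\textbf{Term $I_2$.} You arrive at the right idea, but the decisive point is left implicit and contradicts your own setup. Lemma \ref{lemm:elem} applies only if the sole $(t-\tau)$-dependence is $\mathcal{D}_p(|\Om|(t-\tau))$. You must therefore drop the heat factor and put all derivatives on the product, including the Bernstein factor $2^{3(1-\frac1p)j}$ from Remark \ref{Rem}. This gives $\n{\nabla^\ell\mathcal{N}^{{\rm vel},2}_\Om[B](t)}_{L^p}\leq C\int_{t/2}^t\mathcal{D}_p(|\Om|(t-\tau))\n{B\otimes B}_{\dB^{\ell+1+3(1-\frac1p)}_{1,1}}\dtau$. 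Lemmas \ref{lemm:prod-est} and \ref{lemm:interp} then give $\n{B\otimes B}_{\dB^{\ell+1+3(1-\frac1p)}_{1,1}}\leq C\n{B}_{L^2}\n{\nabla^{\ell+1}B}_{L^2}^{\frac1p}\n{\nabla^{\ell+4}B}_{L^2}^{1-\frac1p}\leq C(1+\tau)^{-\frac32}\tau^{-\frac{\ell+1}2-\frac32(1-\frac1p)}$. With the version you first wrote (one derivative on the kernel, a $\dB^0_{1,\infty}$-type norm on the product), the non-integrable singularity $(t-\tau)^{-\frac32(1-\frac1p)-\frac12}$ remains and Lemma \ref{lemm:elem} cannot be used.

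After Lemma \ref{lemm:elem}, the bound equals $(1+t)^{-\frac32}\,t$ times the target rate, which is a spare factor of order $t^{-\frac12}$. That spare factor is all that is needed for $p=\infty$. The same computation gives $C(1+t)^{-\frac32}t^{-\frac{\ell+4}2}|\Om|^{-1}\log(e+|\Om|t)$, and $(1+t)^{-\frac32}|\Om|^{-1}(1+|\Om|t)\log(e+|\Om|t)\leq C_\Om$ for $|\Om|\geq\Om_0$. This yields the $\Om$-dependent constant in \eqref{L-infty-B}.

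Your near-diagonal cut and $L^4$--$L^\infty$ interpolation are therefore unnecessary. The missing rotational decay of $B$ is compensated because the quadratic term, measured through $L^2$ norms of $B$, decays faster than required.
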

\begin{proof}
We decompose $\mathcal{N}_{\Om}^{\rm vel}[B]$ into two parts as 
\begin{align}
\mathcal{N}_{\Om}^{\rm vel}[B](t)=
\mathcal{N}_{\Om}^{{\rm vel},1}[B](t)
+
\mathcal{N}_{\Om}^{{\rm vel},2}[B](t), 
\end{align}
with 
\begin{align}
&\mathcal{N}_{\Om}^{{\rm vel},1}[B](t):=
\int_0^{\frac{t}{2}} T_{\Omega} (t-\tau) \mathbb{P} \div (B \otimes B)(\tau)\dtau, \\
&\mathcal{N}_{\Om}^{{\rm vel},2}[B](t):=
\int_{\frac{t}{2}}^{t} T_{\Omega} (t-\tau) \mathbb{P} \div (B \otimes B)(\tau)\dtau. 
\end{align}
It follows from Lemma \ref{thm:lin-decay} and Lemma \ref{lemm:L2-decay} that for any $2 \leq p \leq \infty$
    \begin{align}
        \n{\nabla^{\ell} \mathcal{N}_{\Om}^{{\rm vel},1}[B](t)}_{L^{p}}
        &
        \leq 
        C\int_0^{\frac{t}{2}} 
        (t-\tau)^{-\frac{3}{2}(1-\frac{1}{p})-\frac{\ell+1}{2}}\mathcal{D}_p(|\Om|(t-\tau))
        \n{B(\tau) \otimes B(\tau)}_{L^1}\dtau
        \\
        &
        \leq 
        C
        t^{-\frac{3}{2}(1-\frac{1}{p})-\frac{\ell+1}{2}}\mathcal{D}_p(|\Om| t)
        \int_0^\infty \n{B(\tau)}_{L^2}^2 \dtau \label{dec-L^p-1}
        \\
        &
        \leq 
        C
        t^{-\frac{3}{2}(1-\frac{1}{p})-\frac{\ell+1}{2}}\mathcal{D}_p(|\Om| t)
        \int_0^\infty (1+\tau)^{-\frac{3}{2}} \dtau
        \\
        &
        =
        C
        t^{-\frac{3}{2}(1-\frac{1}{p})-\frac{\ell+1}{2}}\mathcal{D}_p(|\Om| t).
    \end{align}
Next, we consider the $L^p$ estimate for $\mathcal{N}_{\Om}^{{\rm vel},2}[B]$ with $2\leq p<\infty$. By Remark \ref{Rem}, Lemma \ref{lemm:L2-decay}, Lemma \ref{lemm:elem}, the product estimate from Lemma \ref{lemm:prod-est} and the interpolation inequality from Lemma \ref{lemm:interp}, we deduce 
\begin{align}
        \n{
        \nabla^{\ell} 
        \mathcal{N}_{\Om}^{{\rm vel},2}[B](t) 
        }_{L^p}
        &
        \leq
        C\int_\frac{t}{2}^t 
        \mathcal{D}_p(\Omega(t-\tau))
        \n{B(\tau) \otimes B(\tau)}_{\dB_{1,1}^{\ell+1+3(1-\frac{1}{p})}} 
        \dtau
        \\
        &
        \leq
        C\int_\frac{t}{2}^t 
        \mathcal{D}_p(\Omega(t-\tau))
        \n{B(\tau)}_{L^2}
        \n{B(\tau)}_{\dot B_{2,1}^{\ell+1+3(1-\frac{1}{p})}} \dtau
        \\ 
        &
        \leq
        C\int_\frac{t}{2}^t 
        \mathcal{D}_p(\Omega(t-\tau))
        \n{B(\tau)}_{L^2}
        \n{\nabla^{\ell+1} B(\tau)}_{L^2}^{\frac{1}{p}}
        \n{\nabla^{\ell+4} B(\tau)}_{L^2}^{1-\frac{1}{p}}
       \dtau 
        \\
        & 
        \leq
        C
        \int_\frac{t}{2}^t 
        \mathcal{D}_p(\Omega(t-\tau))
        (1+\tau)^{-\frac{3}{2}}\tau^{-\frac{ \ell + 1}{2}-\frac{3}{2}(1-\frac{1}{p})}\dtau  \label{dec-L^p-2}
        \\
        &
        \leq
        C
        (1+t)^{-\frac{3}{2}}
        t^{-\frac{\ell+1}{2}-\frac{3}{2}(1-\frac{1}{p})}
        \int_0^\frac{t}{2}
        \mathcal{D}_p(|\Omega|s)
        \, {\rm d} s 
        \\
        &
        \leq
        C
        (1+t)^{-\frac{3}{2}}
        t^{-\frac{\ell-1}{2}-\frac{3}{2}(1-\frac{1}{p})}
        \mathcal{D}_p(|\Omega| t).
    \end{align}
Based on \eqref{dec-L^p-1} and \eqref{dec-L^p-2}, we finish the proof of \eqref{Prop:dec-p-B} for $2\leq p<\infty$. We notice that the same argument as \eqref{dec-L^p-2}, upon using Remark \ref{Rem}, gives
\begin{align}
  \n{
        \nabla^{\ell} 
        \mathcal{N}_{\Om}^{{\rm vel},2}[B](t) 
        }_{L^{4,\infty}}   
        \leq 
         C
        (1+t)^{-\frac{3}{2}}
        t^{-\frac{\ell-1}{2}-\frac{3}{2}(1-\frac{1}{4})}
    (1+|\Om|t)^{-\frac{3}{4}} . 
\end{align}
On the other hand, by invoking Remark \ref{Rem} again, the argument in \eqref{dec-L^p-1} readily gives
\begin{align}
  \n{
        \nabla^{\ell} 
        \mathcal{N}_{\Om}^{{\rm vel},1}[B](t) 
        }_{L^{4,\infty}}   
        \leq 
         C
        t^{-\frac{3}{2}(1-\frac{1}{4})-\frac{\ell+1}{2}}
    (1+|\Om|t)^{-\frac{3}{4}} . 
\end{align}
Combining the two estimates above, we conclude \eqref{Prop:dec-p-44-B}. 

To proceed, we handle the $L^{\infty}$ estimate for $\mathcal{N}_{\Om}^{{\rm vel},2}[B]$. Similar to \eqref{dec-L^p-2}, we have
\begin{align}
        \n{
        \nabla^{\ell} 
        \mathcal{N}_{\Om}^{{\rm vel},2}[B](t) 
        }_{L^{\infty}}
        &
        \leq
        C\int_\frac{t}{2}^t 
        \mathcal{D}_{\infty}(\Omega(t-\tau))
        \n{B(\tau) \otimes B(\tau)}_{\dB_{1,1}^{\ell+4}} 
        \dtau
        \\
        &
        \leq
        C\int_\frac{t}{2}^t 
        \mathcal{D}_{\infty}(\Omega(t-\tau))
        \n{B(\tau)}_{L^2}
        \n{B(\tau)}_{\dot B_{2,1}^{\ell+4}} \dtau
        \\ 
        &
        \leq
        C\int_\frac{t}{2}^t 
        \mathcal{D}_p(\Omega(t-\tau))
        \n{B(\tau)}_{L^2}
        \n{\nabla^{\ell+3} B(\tau)}_{L^2}^{\frac{1}{2}}
        \n{\nabla^{\ell+5} B(\tau)}_{L^2}^{\frac{1}{2}}
       \dtau 
        \\
        & 
        \leq
        C
        \int_\frac{t}{2}^t 
        \mathcal{D}_{\infty}(\Omega(t-\tau))
        (1+\tau)^{-\frac{3}{2}}\tau^{ -\frac{ \ell + 4}{2} }\dtau 
        \label{dec-L^p-200}    
        \\
        &
        \leq
        C
        (1+t)^{-\frac{3}{2}}
        t^{-\frac{\ell+4}{2}}
        \int_0^\frac{t}{2}
        \mathcal{D}_{\infty} (|\Omega|s)
        \, {\rm d} s 
        \\
        &
        \leq 
        C
        (1+t)^{-\frac{3}{2}}
        t^{-\frac{\ell+4}{2}}
        |\Omega|^{-1}    \log(e+|\Omega|t) \\
        &
        \leq
        C_{\Omega,\ell} 
        (1+|\Omega|t)^{-1}  
        t^{-\frac{\ell+ 4}{2}}
      ,
    \end{align}
where we used the fact that
\begin{align}
 (1+t)^{-\frac{3}{2}} |\Omega|^{-1} 
 (1+|\Omega|t)  \log(e+|\Omega|t) 
 \leq C_{\Omega}
\end{align}
for all $|\Omega| \geq \Omega_0$ and $t>0$. 
\end{proof}

We are in a position to prove the nonlinear decay estimates of the velocity field. By Lemma \ref{thm:lin-decay}, Lemma \ref{prop:nonl-dec} and Lemma \ref{prop:nonl-dec-2}, we see for $2 \leq p < \infty$ that 
\begin{align}
    \n{\nabla^{\ell}  u(t)}_{L^p} 
    \leq{}& 
    \n{ \nabla^{\ell} T_\Omega(t)u_0}_{L^p}
    +
    \n{\nabla^{\ell} \mathcal{N}_{\Om}^{\rm vel}[u](t)}_{L^p} 
    +
     \n{\nabla^{\ell} \mathcal{N}_{\Om}^{\rm vel}[B](t)}_{L^p}
    \\
    \leq{}& 
    \n{ \nabla^{\ell} T_\Omega(t)u_0}_{L^p}
    +
    K_{p,\ell}
    t^{-\frac{3}{2}(1-\frac{1}{p})-\frac{\ell+1}{2}}
    \mathcal{D}_p(|\Om| t) \label{nonlin:p-small}\\
    ={}& 
    o\sp{t^{-\frac{3}{2}(1-\frac{1}{p})-\frac{\ell}{2}}
    \mathcal{D}_p(|\Om| t)}
\end{align}
as $t \to \infty$. For the case of $L^\infty$, we infer by the same token that
\begin{align}
    \n{\nabla^{\ell}  u(t)}_{L^\infty} 
    \leq{}& 
     \n{ \nabla^{\ell} T_\Omega(t)u_0}_{L^\infty} 
     +
     \n{\nabla^{\ell} \mathcal{N}_{\Om}^{\rm vel}[u](t)}_{L^\infty} 
    +
     \n{\nabla^{\ell} \mathcal{N}_{\Om}^{\rm vel}[B](t)}_{L^\infty} \\
    \leq{}& 
    \n{ \nabla^{\ell} \mathcal{T}_\Omega(t)u_0}_{L^\infty}
    + 
    K_{\infty,\ell}
    t^{-\frac{3}{2}-\frac{\ell+1}{2}}
    \mathcal{D}_\infty(|\Om| t) \\
    &
    +
    Ct^{-\frac{5}{2}-\frac{\ell}{2}}
    \mathcal{D}_4(|\Om| t)^2
    \n{u}_{X_{\Om,4}^{0}(t)}
    \n{u}_{X_{\Om,4}^{\ell+1}(t)}^{\frac{1}{2}}
    \n{u}_{X_{\Om,4}^{\ell+2}(t)}^{\frac{1}{2}}
    \\
    & +C_{\Omega,\ell}  t^{-\frac{3}{2}-\frac{\ell+1}{2}}
        \mathcal{D}_\infty(|\Om| t) 
        \\ 
    ={}& 
    o \sp{t^{-\frac{3}{2}-\frac{\ell}{2}}
    \mathcal{D}_\infty(|\Om| t)}
\end{align}
as $t \to \infty$,
where we have used 
\begin{align}
\n{u}_{X_{\Om,4}^{0}(t)}
\n{u}_{X_{\Om,4}^{\ell+1}(t)}^{\frac{1}{2}}
\n{u}_{X_{\Om,4}^{\ell+2}(t)}^{\frac{1}{2}}=o(1), \quad t \to \infty. 
\end{align}
Observe that the above fact comes from \eqref{nonlin:p-small} with $p=4$. This verifies the decay estimates of the velocity in Theorem \ref{thm-1}.

Next, we consider the enhanced decay estimate of the velocity field, by assuming moreover that $|x|u_0(x), |x|B_0(x) \in L^1(\R^3)$. For $2\leq p<\infty$, we deduce from Lemma \ref{thm:lin-decay}, Lemma \ref{prop:nonl-dec} and \eqref{Prop:dec-p-B} that
\begin{align}
    \n{\nabla^\ell  u(t)}_{L^p} 
    \leq{}& 
     \n{ \nabla^{\ell} T_\Omega(t)u_0}_{L^p}
    +
    \n{\nabla^{\ell} \mathcal{N}_{\Om}^{\rm vel}[u](t)}_{L^p} 
    +
     \n{\nabla^{\ell} \mathcal{N}_{\Om}^{\rm vel}[B](t)}_{L^p} \\
    \leq{}& 
    Ct^{-\frac{3}{2}(1-\frac{1}{p})-\frac{\ell+1}{2}}
    \mathcal{D}_p(|\Omega| t)\n{|x|u_0(x)}_{L^1(\R^3_x)} \label{est:nonlin-Lp-100}
    \\
    &
    + 
     K_{p,\ell}
    t^{-\frac{3}{2}(1-\frac{1}{p})-\frac{\ell+1}{2}}
    \mathcal{D}_p(|\Om| t) \\
    \leq{}& 
    C t^{-\frac{3}{2}(1-\frac{1}{p})-\frac{\ell+1}{2}}
    \mathcal{D}_p(|\Om| t)
\end{align}
for all $t>0$. For the case of $L^\infty$, we deduce from Lemma \ref{thm:lin-decay}, Lemma \ref{prop:nonl-dec} and \eqref{L-infty-B} that 
\begin{align}
    \n{\nabla^\ell   u(t)}_{L^\infty} 
    \leq{}& 
    \n{ \nabla^\ell  T_\Omega(t)u_0}_{L^\infty}
    +
    \n{\nabla^\ell  \mathcal{N}^{\rm vel}_\Om[u](t)}_{L^\infty} 
     +
     \n{\nabla^{\ell} \mathcal{N}_{\Om}^{\rm vel}[B](t)}_{L^\infty}
     \\
    \leq{}& 
    Ct^{-\frac{3}{2}-\frac{\ell +1}{2}}
    \mathcal{D}_\infty(|\Omega| t)\n{|x|u_0(x)}_{L^1(\R^3_x)} 
    \\
    &
    + 
    K_{\infty,\ell }
    t^{-\frac{3}{2}-\frac{\ell +1}{2}}
    \mathcal{D}_\infty(|\Om| t) \label{est:nonlin-Linf}\\
    &
    +
    Ct^{-\frac{5}{2}-\frac{\ell }{2}}
    \mathcal{D}_4(|\Om| t)^2
    \n{u}_{X_{\Om,4}^{0}(t)}
    \n{u}_{X_{\Om,4}^{\ell +1}(t)}^{\frac{1}{2}}
    \n{u}_{X_{\Om,4}^{\ell +2}(t)}^{\frac{1}{2}}
    \\
    &
      +C_{\Omega,\ell}  t^{-\frac{3}{2}-\frac{\ell+1}{2}}
        \mathcal{D}_\infty(|\Om| t) 
        \\
    \leq{}& 
    C_{\Omega,\ell} t^{-\frac{3}{2}-\frac{\ell +1}{2}}
    \mathcal{D}_\infty(|\Om| t)
\end{align}
for all $t \geq 1$. Here, we have invoked 
\begin{align}
    \sup_{t>0}
    \sp{
    \n{u}_{X_{\Om,4}^{0}(t)}
    \n{u}_{X_{\Om,4}^{\ell+1}(t)}^{\frac{1}{2}}
    \n{u}_{X_{\Om,4}^{\ell+2}(t)}^{\frac{1}{2}}
    } 
    \leq C,
\end{align}
which is ensured by \eqref{est:nonlin-Lp-100}. This proves the enhanced decay estimate of velocity in Theorem \ref{thm-2}. Remark that the logarithmic correction for $p=4$ in $L^{4,\infty}$ is proved similarly, the details of which are omitted.

\subsection{Decay estimates of the magnetic field} Now we turn to the decay estimates of the magnetic field, based on the decay estimates of the velocity field at hand. We only focus on the enhanced decay estimate in Theorem \ref{thm-2}, since the corresponding estimate in Theorem \ref{thm-1} follows in the same way. To this end, we decompose $\mathcal{N}^{\rm mag}[u,B]$ into two parts as 
\begin{align}
\mathcal{N}^{\rm mag}[u,B](t)=
\mathcal{N}^{{\rm mag},1}[u,B](t)
+
\mathcal{N}_{\Om}^{{\rm mag},2}[u,B](t), 
\end{align}
with 
\begin{align}
&\mathcal{N}^{{\rm mag},1}[u,B](t):=
\int_0^{\frac{t}{2}}e^{(t-\tau) \Delta} \nabla \times (B \times u) (\tau) 
 \dtau, \\
&\mathcal{N}^{{\rm mag},2}[u,B](t):=
\int_{\frac{t}{2}}^{t} e^{(t-\tau) \Delta} \nabla \times (B \times u) (\tau) 
 \dtau. 
\end{align}
\begin{lemm}\label{lemm:Duhamel-B}
Let the assumptions of Theorem \ref{thm-2} be satisfied. Then, for any $2 \leq p \leq  \infty$ and $ \ell \in \N \cup \{0\}$, there exists a positive constant $K_{p,\ell}=K_{p,\ell}(u_0,B_0)$, independent of $\Om$, such that    
\begin{align}
\n{\nabla^{\ell} \mathcal{N}^{{\rm mag},1}[u,B](t)}_{L^p}
        \leq 
        K_{p,\ell}
        t^{-\frac{3}{2}(1-\frac{1}{p})-\frac{\ell+1}{2}}
\end{align}
 for all $t>0$. For any $2 \leq p < \infty$ and $ \ell \in \N \cup \{0\}$, there exists a positive constant $K_{p,\ell}=K_{p,\ell}(u_0,B_0)$, independent of $\Om$, such that    
\begin{align}\label{B-p}
\n{\nabla^{\ell} \mathcal{N}^{{\rm mag},2}[u,B](t)}_{L^p}
        \leq 
        K_{p,\ell}
        t^{-\frac{3}{2}(1-\frac{1}{p})-\frac{\ell+3}{2}}
\end{align}
for all $t>0$. For any $ p =\infty$ and $ \ell \in \N \cup \{0\}$, there exists a positive constant $C_{\Omega,\ell}=C_{\Omega,\ell}(u_0,B_0)$ such that   
\begin{align}\label{B-infty}
\n{\nabla^{\ell} \mathcal{N}^{{\rm mag},2}[u,B](t)}_{L^p}
        \leq 
       C_{\Omega,\ell}
        t^{-\frac{3}{2} -\frac{\ell+3}{2}}
\end{align}
for all $t>1$. 
\end{lemm}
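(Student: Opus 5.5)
The plan is to copy the structure of the proof of Lemma \ref{prop:nonl-dec-2}. The heat semigroup takes the place of $T_\Om$ and Lemma \ref{thm:heat-decay} takes the place of Lemma \ref{thm:lin-decay}. The key structural fact is that $\nabla\times(B\times u)$ is a derivative of the bilinear quantity $B\otimes u$. So the low-frequency part can use the $L^1$ smoothing of $\nabla^{\ell+1}e^{(t-\tau)\Delta}$, while the high-frequency part only needs the $L^2$ decay from Lemma \ref{lemm:L2-decay}.

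\textbf{Estimate for $\mathcal{N}^{{\rm mag},1}$.} For $0\le\tau\le t/2$ we have $t-\tau\ge t/2$. The standard heat kernel bound $\n{\nabla^{\ell+1}e^{s\Delta}f}_{L^p}\le Cs^{-\frac32(1-\frac1p)-\frac{\ell+1}{2}}\n{f}_{L^1}$ holds for $1\le p\le\infty$. Using it together with Hölder's inequality gives
\begin{align}
\n{\nabla^{\ell}\mathcal{N}^{{\rm mag},1}[u,B](t)}_{L^p}\le Ct^{-\frac32(1-\frac1p)-\frac{\ell+1}{2}}\int_0^\infty\n{B(\tau)}_{L^2}\n{u(\tau)}_{L^2}\dtau .
\end{align}
By Lemma \ref{lemm:L2-decay}, the integrand is bounded by $C(1+\tau)^{-3/2}$, so the integral is finite. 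This bound is uniform in $\Om$ and covers every $p\in[2,\infty]$, including $p=\infty$.

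\textbf{Estimate for $\mathcal{N}^{{\rm mag},2}$.} For $t/2\le\tau\le t$ I would put all derivatives on the nonlinearity. Applying Bernstein/Young frequency by frequency gives $\n{\Delta_j e^{s\Delta}f}_{L^p}\le C2^{3(1-\frac1p)j}\n{\Delta_jf}_{L^1}$, and therefore
\begin{align}
\n{\nabla^{\ell}\mathcal{N}^{{\rm mag},2}(t)}_{L^p}\le C\int_{t/2}^t\n{B\otimes u(\tau)}_{\dB^{\ell+1+3(1-\frac1p)}_{1,1}}\dtau .
\end{align}
Lemma \ref{lemm:prod-est} with $p_1=p_2=2$ bounds the integrand by
\begin{align}
\n{B}_{L^2}\n{u}_{\dB^{\sigma}_{2,1}}+\n{B}_{\dB^{\sigma}_{2,1}}\n{u}_{L^2},\qquad \sigma=\ell+1+3\big(1-\tfrac1p\big).
\end{align}
Next, Lemma \ref{lemm:interp} bounds each $\dB^{\sigma}_{2,1}$ norm by a product of integer-order $\dot H$ norms. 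When $p<\infty$, $\sigma$ lies strictly between $\ell+1$ and $\ell+4$, so these norms are $\n{\nabla^{\ell+1}\cdot}_{L^2}^{1/p}\n{\nabla^{\ell+4}\cdot}_{L^2}^{1-1/p}$. Lemma \ref{lemm:L2-decay} then bounds the integrand by $C(1+\tau)^{-3/2}\tau^{-\frac{\ell+1}{2}-\frac32(1-\frac1p)}$. Integrating over an interval of length $t/2$ yields
\begin{align}
Ct(1+t)^{-\frac32}t^{-\frac{\ell+1}{2}-\frac32(1-\frac1p)}\le Ct^{-\frac32(1-\frac1p)-\frac{\ell+3}{2}}
\end{align}
for $t\geq1$. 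For $t<1$ the same integral is bounded by $Ct\cdot t^{-\frac{\ell+1}{2}-\frac32(1-\frac1p)}$, which is at most $Ct^{-\frac32(1-\frac1p)-\frac{\ell+3}{2}}$ because $t\le t^{-1}$ there. This proves \eqref{B-p}.

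\textbf{The case $p=\infty$ and the main obstacle.} For $p=\infty$ the index is $\sigma=\ell+4$, an integer. Here I would interpolate between $\nabla^{\ell+3}$ and $\nabla^{\ell+5}$ with weights $1/2$, exactly as in \eqref{dec-L^p-200}. The same computation then gives $t^{-\frac32-\frac{\ell+3}{2}}$ for $t>1$. The main obstacle is the uniformity of the higher-order $L^2$ decay constants $C_\ell$ of Lemma \ref{lemm:L2-decay}. The statement allows the constant $C_{\Omega,\ell}$ to depend on $\Om$, so any $\Om$-dependence entering through the global solution, such as the choice of $\Omega_0$, can be absorbed into it. This is also why \eqref{B-infty} is restricted to $t>1$. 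Otherwise the argument is routine bookkeeping of exponents.
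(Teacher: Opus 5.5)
\textbf{There is a genuine gap in your estimate of $\mathcal{N}^{{\rm mag},2}$.} For $t\ge 1$ you use
\[
Ct(1+t)^{-\frac32}t^{-\frac{\ell+1}{2}-\frac32(1-\frac1p)}\le Ct^{-\frac32(1-\frac1p)-\frac{\ell+3}{2}},
\]
and this inequality is false. Since $t(1+t)^{-3/2}\sim t^{-1/2}$, the left side is of order $t^{-\frac32(1-\frac1p)-\frac{\ell+2}{2}}$, which is a factor $t^{1/2}$ larger than the target. The same loss occurs for $p=\infty$: interpolating between $\nabla^{\ell+3}$ and $\nabla^{\ell+5}$ gives only $t^{-\frac32-\frac{\ell+2}{2}}$.

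This is not a slip you can fix inside your setup. You feed in only the rates $(1+\tau)^{-3/4}\tau^{-k/2}$ of Lemma \ref{lemm:L2-decay} for both $u$ and $B$. Then $\n{B\otimes u(\tau)}_{L^1}$ is only known to be $O(\tau^{-3/2})$. Integrating the heat-scaled bound over an interval of length $t/2$ then yields $t^{-\frac32(1-\frac1p)-\frac{\ell+2}{2}}$, not the stated rate. The missing $t^{-1/2}$ must come from the weighted hypothesis $|x|u_0\in L^1$, and your argument never uses it.

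\textbf{How the paper gets the extra $t^{-1/2}$.} It uses the $1/2$-enhanced velocity decay \eqref{est:nonlin-Lp-100}, proved just before this lemma. The steps are:
\begin{itemize}
\item By Young's inequality, bound the integrand by $\n{G(t-\tau)}_{L^2}\n{\nabla^{\ell+1}(B\times u)}_{L^{2p/(p+2)}}$, with $\n{G(t-\tau)}_{L^2}\le C(t-\tau)^{-3/4}$.
\item Expand by Leibniz and bound each product $\n{\nabla^k B}_{L^2}\n{\nabla^{\ell+1-k}u}_{L^p}$ by $C\tau^{-\frac34-\frac32(1-\frac1p)-\frac{\ell+2}{2}}$. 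The enhanced rate of $u$ supplies the extra $\tau^{-1/2}$ here.
\item Use $\int_{t/2}^t(t-\tau)^{-3/4}\,d\tau\le Ct^{1/4}$ to obtain $t^{-\frac32(1-\frac1p)-\frac{\ell+3}{2}}$.
\end{itemize}
For $p=\infty$ the paper uses \eqref{est:nonlin-Linf} instead. That estimate is the source of the $\Om$-dependence of $C_{\Om,\ell}$ and of the restriction $t>1$. They do not come from the choice of $\Omega_0$, as you suggest.

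\textbf{How to repair your route.} Keep Lemma \ref{lemm:L2-decay} for $B$. For $u$, use $\n{\nabla^m u(\tau)}_{L^2}\le C\tau^{-\frac34-\frac{m+1}{2}}$, which is \eqref{est:nonlin-Lp-100} with $p=2$. Apply it in both terms $\n{B}_{L^2}\n{u}_{\dB^{\sigma}_{2,1}}$ and $\n{B}_{\dB^{\sigma}_{2,1}}\n{u}_{L^2}$ of the product estimate. Each term then gains $\tau^{-1/2}$, and your bookkeeping gives the claimed rates.

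Your estimate of $\mathcal{N}^{{\rm mag},1}$ is correct and coincides with the paper's.
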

\begin{proof}

For any $2 \leq p \leq  \infty$, it follows that
 \begin{align}
        \n{\nabla^{\ell} \mathcal{N}^{{\rm mag},1}[u,B](t)}_{L^{p}}
        &
        \leq 
        C\int_0^{\frac{t}{2}} 
        (t-\tau)^{-\frac{3}{2}(1-\frac{1}{p})-\frac{\ell+1}{2}}
        \n{B(\tau) \times u(\tau)}_{L^1}\dtau
        \\
        &
        \leq 
        C
        t^{-\frac{3}{2}(1-\frac{1}{p})-\frac{\ell+1}{2}}
        \int_0^\infty \n{B(\tau)}_{L^2}\n{u(\tau)}_{L^2} \dtau \label{dec-L^p-100}
        \\
        &
        \leq 
        C
        t^{-\frac{3}{2}(1-\frac{1}{p})-\frac{\ell+1}{2}}
        \int_0^\infty (1+\tau)^{-\frac{3}{2}} \dtau
        \\
        &
        =
        C
        t^{-\frac{3}{2}(1-\frac{1}{p})-\frac{\ell+1}{2}}.
    \end{align}
For any $2\leq p<\infty$, we infer from Lemma \ref{lemm:L2-decay} and \eqref{est:nonlin-Lp-100} that 
\begin{align}
  & \n{\nabla^{\ell} \mathcal{N}^{{\rm mag},2}[u,B](t)}_{L^{p}} \\
        & \quad 
        \leq    
        C \int_{\frac{t}{2}}^{t}
        \n{G(t-\tau)}_{L^2}  
        \n{ \nabla^{\ell+1} (B\times u)  }_{L^{\frac{2p}{2+p}  }}  \dtau \\
        & \quad   \leq    
        C \int_{\frac{t}{2}}^{t}
        (t-\tau)^{ -\frac{3}{4} } 
        \sp{
        \n{\nabla^{\ell+1}B  }_{L^2} \n{u}_{L^p}
        +\n{\nabla^{\ell}B  }_{L^2} \n{\nabla u}_{L^p}
        +...+ \n{B}_{L^2} \n{\nabla^{\ell+1}u  }_{L^p} 
        }
         \dtau \\
        &  \quad  \leq    
        C \int_{\frac{t}{2}}^{t}
         (t-\tau)^{ -\frac{3}{4} }
         \tau^{  -\frac{3}{4}- \frac{3}{2}(1-\frac{1}{p} )- \frac{\ell+2}{2}   } \mathcal{D}_{p}(|\Omega|\tau)
          \dtau \\
        &  \quad  \leq  
        C  t^{  -\frac{3}{2}(1-\frac{1}{p} )- \frac{\ell+3}{2}   } 
\end{align}
for all $t>0$. Here, we denote by $G(t)$ the standard $3$D Gaussian. This proves \eqref{B-p}. The proof of \eqref{B-infty} follows similarly by using \eqref{est:nonlin-Linf}. 
\end{proof}

With Lemma \ref{lemm:Duhamel-B} at hand, we show the decay estimate of the velocity field. By Lemma \ref{thm:heat-decay} and \eqref{B-p}, we see for $2\leq p<\infty$ that 
\begin{align}
\n{\nabla^{\ell} B(t)}_{L^p} 
& \leq  
\n{\nabla^{\ell}e^{t \Delta}B_0}_{L^p}
+
\n{\nabla^{\ell}e^{t \Delta}  \mathcal{N}^{{\rm mag},1}[u,B](t)    }_{L^p}
+
\n{\nabla^{\ell}e^{t \Delta}  \mathcal{N}^{{\rm mag},2}[u,B](t)    }_{L^p} \\
& \leq 
Ct^{-\frac{3}{2}(1-\frac{1}{p})-\frac{\ell+1}{2}}
+
 K_{p,\ell}
        t^{-\frac{3}{2}(1-\frac{1}{p})-\frac{\ell+1}{2}}
        +
         K_{p,\ell}
        t^{-\frac{3}{2}(1-\frac{1}{p})-\frac{\ell+3}{2}} \\
        & \leq 
        Ct^{-\frac{3}{2}(1-\frac{1}{p})-\frac{\ell+1}{2}}
\end{align}
for all $t >1$. The case of $p=\infty$ follows similarly with the help of \eqref{B-infty}.

\vspace{10mm}

\noindent{{\bf{Acknowledgements}}} 
The work of Mikihiro Fujii was supported by JSPS KAKENHI, Grant Number JP25K17279. 
The work of Yang Li was supported by National Natural Science Foundation of China, Grant number 12571228 and Natural Science Foundation of Anhui Province, Grant number 2408085MA018.  
\vspace{4mm}

\noindent{{\bf{Data Availability}}} Data sharing is not applicable to this article as no datasets were generated or analysed
during the current study.

\vspace{4mm}

\noindent{{\bf{Conflicts of interest}}} All authors certify that there are no conflicts of interest for this work.

\end{document}